\documentclass[12pt]{article}
\usepackage{amssymb,amsfonts,amsmath, psfrag,eepic,colordvi,graphicx,epsfig}
\usepackage{amssymb,latexsym,graphics,array}
\usepackage{MnSymbol}
\usepackage[enableskew]{youngtab}
\usepackage{float,enumerate,enumitem}
\usepackage{tikz,ifthen}
\usepackage{pgfplots}
\usepackage{hyperref}
\usepackage{booktabs}
\usepackage{mathtools}
\usetikzlibrary{math}
\usepackage{physics}
\usepackage{amsthm}
\usepackage{booktabs}  
\usepackage{cleveref}

\hypersetup{colorlinks=true}
\allowdisplaybreaks[4]

\topskip  
\numberwithin{equation}{section}
\newtheorem{theorem}{Theorem}[section]
\newtheorem{proposition}[theorem]{Proposition}

\newtheorem{lemma}[theorem]{Lemma}

\theoremstyle{definition}

\newtheorem{remark}[theorem]{Remark}

\begin{document}
	\parskip 6pt
	
	\pagenumbering{arabic}
	\def\sof{\hfill\rule{2mm}{2mm}}
	\def\ls{\leq}
	\def\gs{\geq}
	\def\SS{\mathcal S}
	\def\qq{{\bold q}}
	\def\MM{\mathcal M}
	\def\TT{\mathcal T}
	\def\EE{\mathcal E}
	\def\lsp{\mbox{lsp}}
	\def\rsp{\mbox{rsp}}
	\def\pf{\noindent {\it Proof.} }
	\def\mp{\mbox{pyramid}}
	\def\mb{\mbox{block}}
	\def\mc{\mbox{cross}}
	\def\qed{\hfill \rule{4pt}{7pt}}
	\def\block{\hfill \rule{5pt}{5pt}}
	\def\Asctop{\mathrm{Asctop}}
	\def\Ascbot{\mathrm{Ascbot}}
	\def\Desbot{\mathrm{Desbot}}
	\def\Destop{\mathrm{Destop}}
	\def\asctop{\mathrm{asctop}}
	\def\ascbot{\mathrm{ascbot}}
	\def\desbot{\mathrm{desbot}}
	\def\destop{\mathrm{destop}}
	\def\A{\mathcal{A}}
	\def\MA{\hat{\mathcal{A}}}
	\def\RA{\hat{\mathcal{B}}}
	\def\CC{\hat{\mathcal{C}}}
	\def\DD{\hat{\mathcal{D}}}
	\def\nub{\mathrm{Nub}}
	\def\Cay{\text{Cay}}
	\def\Dec{\mathrm{Dec}}
	\def\hh{\text{hat}}
\def\ha{\eta}
	\def\Add{\mathrm{Add}}
	\def\std{\mathrm{std}}
	\def\T{\mathcal{T}}

	\def\lr#1{\multicolumn{1}{|@{\hspace{.6ex}}c@{\hspace{.6ex}}|}{\raisebox{-.3ex}{$#1$}}}
	\def\red{\textcolor{red}}

	\begin{center}
{\Large \bf Arrow-Wilf equivalences and enumerative results for short arrow patterns}
	\end{center}
	
	\begin{center}		
			{\small  Robin D.P. Zhou$^{*}$,\footnote{$^*$Corresponding author.}  \footnote{{\em E-mail address:} dapao2012@163.com. } Xinyang Yu}
		
		College of Mathematics Physics and Information\\
		Shaoxing University\\
		Shaoxing 312000, P.R. China\\

	\end{center}
	
\noindent {\bf Abstract.} 	
Arrow patterns, introduced by Berman and Tenner, provide a unified framework for studying permutation classes where both one-line and cycle structure constraints are present. In this paper, we continue the systematic study of arrow pattern avoidance initiated by Archer and Laudone. We establish several structural results, including a key lemma that translates arrow patterns into vincular patterns under certain conditions, and derive a series of arrow-Wilf equivalences arising from reversal, complementation, and insertion operations. We also resolve the two cases $(12;3\to 3)$ and $(21;3\to 3)$ left open by Archer and Laudone, and enumerate the arrow patterns of the form $(\nu; b\to c)$ of size $3$ with $\nu \in \{31, 23, 32\}$ and $b,c\in [3]$, providing explicit formulas connecting the results to Bell numbers, Bessel numbers, Catalan numbers, and derangement numbers. Together with earlier work of Archer and Laudone, this leaves only $(32;1\to 3)$ unresolved for $|\nu|\le 2$, which we pose as an open problem.
	
\noindent {\bf Keywords}: arrow patterns,  vincular patterns, arrow-Wilf equivalences, Bell numbers, derangement numbers.

	\noindent {\bf AMS  Subject Classifications}: 05A05, 05C30

	
	\section{Introduction}

Let $[n] = \{1,2,\ldots,n\}$ and let $\mathcal{S}_n$ denote the set of permutations of $[n]$.
A permutation $\pi \in \mathcal{S}_n$ can be written in one-line notation as $\pi = \pi_1\pi_2\cdots\pi_n$, where $\pi_i = \pi(i)$.
It can also be expressed in cycle notation.
Throughout this paper, we use the standard cycle representation, where each cycle is written with its largest element first, and the cycles are ordered by increasing largest elements.
For instance, the one-line permutation $\pi = 6257341$ has standard cycle form $(2)(53)(7164)$.

There is a classical bijection $\theta: \mathcal{S}_n \rightarrow \mathcal{S}_n$, often called \emph{Foata’s first
	fundamental transformation}~\cite{Lothaire}, which connects the one-line notation of a permutation with its standard cycle form. 
Given a permutation written in standard cycle notation, the map $\theta$ removes the parentheses to produce a permutation in one-line notation. 
Conversely, the inverse map $\theta^{-1}$ recovers the standard cycle form from the one-line notation by inserting a left parenthesis before each left-to-right maximum and a right parenthesis immediately before the next left-to-right maximum (or at the end of the permutation). 
For example, continuing with the permutation from the previous paragraph, we have $\theta(6257341) = 2537164$. 
Conversely, if $\pi = 2537164$, then $\hat{\pi} = \theta^{-1}(\pi) = (2)(53)(7164) = 6257341$.
We denote $\hat{\pi} = \theta^{-1}(\pi)$ throughout this paper.

Given a sequence of distinct positive integers $w = w_1w_2\cdots w_k$, its \emph{reduction}, denoted $\operatorname{red}(w)$, is the permutation in $\mathcal{S}_k$ obtained by replacing the $i$-th smallest entry of $w$ with $i$. 
For example, $\operatorname{red}(3715) = 2413$.
A permutation $\pi = \pi_1\pi_2\cdots\pi_n \in \mathcal{S}_n$ is said to \emph{contain} a pattern $\sigma \in \mathcal{S}_k$ if there exist indices $i_1 < i_2 < \cdots < i_k$ such that $\operatorname{red}(\pi_{i_1}\pi_{i_2}\cdots\pi_{i_k}) = \sigma$. 
If no such subsequence exists, we say that $\pi$ \emph{avoids} $\sigma$, or that $\pi$ is $\sigma$-avoiding. 
For instance, the permutation $\pi = 6257341$ contains two occurrences of the pattern $123$, namely the subsequences $257$ and $234$, but it avoids $1234$ since it has no increasing subsequence of length $4$. 
For a set of patterns $\Sigma$, we write $\mathcal{S}_n(\Sigma)$ for the set of permutations in $\mathcal{S}_n$ that avoid every pattern in $\Sigma$. 
When $\Sigma$ consists of a single pattern $\sigma$, we simply write $\mathcal{S}_n(\sigma)$.

The study of pattern avoidance in permutations traces back to the seminal work of Knuth~\cite{Knuth} on stack-sortable permutations, in which he proved that permutations in $\mathcal{S}_n$ avoiding the pattern $231$ are counted by the $n$-th Catalan number $C_n = \frac{1}{n+1}\binom{2n}{n}$. Since then, the subject has developed into a major branch of enumerative combinatorics; we refer the reader to the books of Bóna~\cite{Bona} and Kitaev~\cite{Kitaev} for comprehensive overviews.


A natural generalization of classical pattern avoidance is the notion of vincular patterns, introduced by Babson and Steingrímsson~\cite{Babson}. 
In a \emph{vincular pattern}, an overline connecting adjacent entries indicates that those entries must appear consecutively in any occurrence.
More precisely, an occurrence of a vincular pattern in a permutation is a subsequence that is order-isomorphic to the underlying classical pattern, with the additional requirement that entries corresponding to adjacent positions joined by an overline appear consecutively in the permutation. 
For example, consider the vincular pattern $\overline{21}3$, where the entries corresponding to $2$ and $1$ must be adjacent in any occurrence. 
In the permutation $\pi = 543162$, the subsequence $546$ is order-isomorphic to $213$, and since $5$ and $4$ are consecutive in $\pi$, it forms a valid occurrence of $\overline{21}3$ in $\pi$. 
In contrast, the subsequence $536$ is also order-isomorphic to $213$, but $5$ and $3$ are not adjacent in $\pi$, so it is not an occurrence of $\overline{21}3$ in $\pi$.

Arrow patterns, introduced by Berman and Tenner~\cite{Berman}, represent a novel generalization of vincular patterns that was motivated by their study of shallow permutations. 
An \emph{arrow pattern} is formally defined as a pair $\alpha = (\nu; H)$, where $\nu = \nu_1\nu_2\cdots \nu_m$ is a string of distinct positive integers (the underlying classical pattern) and $H$ is a (possibly empty) collection of arrows $b_i \to c_i$. 
The distinct integers appearing in either $\nu$ or $H$ collectively form the set $[k]$, and this integer $k$ is called the \emph{size} of the arrow pattern. 
We denote by $\mathcal{A}_k$ the set of all arrow patterns of size $k$.

To define containment of an arrow pattern $\alpha = (\nu; H)$ in a permutation $\pi \in \mathcal{S}_n$, we require two simultaneous conditions:
\begin{enumerate}
	\item there must exist a subset $X = \{x_1 < x_2 < \cdots < x_k\} \subseteq [n]$ and indices $t_1 < t_2 < \cdots < t_m$ such that the subsequence $\pi_{t_1}\pi_{t_2}\cdots\pi_{t_m}$ is precisely $x_{\nu_1}x_{\nu_2}\cdots x_{\nu_m}$ in value; that is, the selected entries form an occurrence of the classical pattern $\nu$ in the one-line notation of $\pi$;
	\item for each arrow $b_i \to c_i$ in $H$, we have $\hat{\pi}(x_{b_i}) = x_{c_i}$; in other words, the cycle structure of $\hat{\pi}$ must map the selected value corresponding to $b_i$ to the selected value corresponding to $c_i$.
\end{enumerate}


For example, consider the arrow pattern $\alpha = (142; 2 \to 3)\in \mathcal{A}_4$. If a permutation $\pi$ contains $\alpha$, then there exists a set $X = \{x_1 < x_2 < x_3 < x_4\}$ such that the subsequence $x_1x_4x_2$ forms an occurrence of the classical pattern $142$ in $\pi$, and moreover $\hat{\pi}(x_2) = x_3$.
Let $\pi = 4167253$. Its left-to-right maxima are $4,6,7$, so
$\hat{\pi} = \theta^{-1}(\pi) = (41)(6)(7253)$.
Now choose $X = \{1,2,5,7\}$, so that $x_1=1$, $x_2=2$, $x_3=5$, and $x_4=7$. The subsequence $172$ of $\pi$ is order-isomorphic to $142$. Furthermore, in the cycle structure we have $\hat{\pi}(2)=5=x_3$. Hence $\pi$ contains the arrow pattern $\alpha$, with $(172;\,2\to 5)$ serving as an occurrence of $\alpha$.
Two arrow patterns $\alpha$ and $\beta$ are said to be \emph{arrow-Wilf-equivalent}, written $\alpha \sim \beta$, if $|\mathcal{S}_n(\alpha)| = |\mathcal{S}_n(\beta)|$ for all $n \ge 1$.

The framework of arrow patterns has proven to be remarkably effective for studying permutation classes that involve both one-line and cycle structure constraints. Berman and Tenner~\cite{Berman} used arrow avoidance to characterize shallow permutations, and Laudone~\cite{Laudone} recently showed that the class of $321$-avoiding cyclic permutations can be characterized entirely by arrow pattern avoidance. 
These results suggest that arrow patterns serve as a bridge between the positional and algebraic aspects of permutations.

Building upon these developments, Archer and Laudone~\cite{Archer} initiated a systematic study of arrow pattern avoidance. They established several structural results and enumerated numerous arrow avoidance classes for patterns of size at most $3$. At the end of their paper, they posed several conjectures regarding the interaction between arrow avoidance and classical pattern avoidance. These conjectures were subsequently resolved by Fu and Yang~\cite{Fu}, who provided complete enumerations for the relevant subclasses.

In this paper, we continue this line of investigation by deriving additional structural properties of arrow patterns and presenting new enumerative results. In particular, we establish several general arrow-Wilf equivalences,
 and provide explicit enumeration formulas for all arrow patterns of the form $(\nu; b\to c)\in \mathcal{A}_3$ where $\nu \in \{31, 23, 32\}$ and $b,c\in[3]$, with the exception of $(32;1\to 3)$. 


The rest of this paper is organized as follows. In Section \ref{sec:general}, we establish the foundational structural results that will be used throughout the paper, including a key lemma that translate arrow patterns into vincular patterns and a series of arrow-Wilf equivalences arising from reversal, complementation, and insertion operations. 
In Section \ref{sec:12-33}, we resolve the two cases left open by Archer and Laudone~\cite{Archer}, namely $(12; 3\to 3)$ and $(21; 3\to 3)$, providing explicit enumeration formulas involving derangement numbers. 
In Sections \ref{sec:31-H}, \ref{sec:23-H}, and \ref{sec:32-H}, we systematically enumerate the remaining arrow patterns of the form $(\nu; b\to c)\in \mathcal{A}_3$ where $\nu \in \{31, 23, 32\}$, connecting our results to Bell numbers, Catalan numbers, and Bessel numbers. 
The results of Sections \ref{sec:31-H}, \ref{sec:23-H}, and \ref{sec:32-H} are summarized in Tables \ref{tab:31-H}, \ref{tab:23-H}, and \ref{tab:32-H}, respectively.
Finally, in Section~\ref{sec:remark}, we conclude with a summary of our results, a discussion of the remaining open case, and an outline of possible directions for future research.

\section{General arrow-Wilf equivalences}\label{sec:general}

In this section, we develop the foundational tools that will be used throughout the remainder of the paper. We first establish a key lemma that translates arrow patterns into vincular patterns under certain conditions. Building on this correspondence, we derive a series of arrow-Wilf equivalences, some of which are analogous to the classical Wilf equivalences arising from reversal and complementation of permutations, while others arise from more subtle operations. Together, these results provide a versatile toolkit for reducing the enumeration of arrow avoidance classes to simpler known cases.

%

We shall make frequent use of the following lemma, which translates certain arrow patterns into vincular patterns.

\begin{lemma}[{\cite[Lemma 2.7]{Archer}}]\label{lem:Archer}
	Let $\nu = \nu_1\nu_2\cdots\nu_{k-1}$ be a word on $[k]$ with distinct integers, and let $r$ be the unique element of $[k]\setminus\{\nu_1,\ldots,\nu_{k-1}\}$. Then:
	\begin{enumerate}
		\item[(i)] if $r > \nu_i$, then $\mathcal{S}_n(\nu; r\to \nu_i) = \mathcal{S}_n(\nu_1\cdots\nu_{i-1}\overline{r\nu_i}\nu_{i+1}\cdots\nu_{k-1})$;
		\item[(ii)] if $r < \nu_i$, then $\mathcal{S}_n(\nu; \nu_i\to r) = \mathcal{S}_n(\nu_1\cdots\nu_{i-1}\overline{\nu_i r}\nu_{i+1}\cdots\nu_{k-1})$.
	\end{enumerate}
\end{lemma}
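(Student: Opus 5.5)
The plan is to understand exactly what $\hat{\pi}(x)=y$ means in one-line notation, and then to show that the arrow condition can be traded for an adjacency condition. Write $\pi=\theta(\hat{\pi})$, so $\hat{\pi}=\theta^{-1}(\pi)$ is obtained from $\pi$ by inserting parentheses before each left-to-right maximum. Within a cycle written in standard form, each element is sent to the next entry of $\pi$. The last entry of a cycle is sent to the first entry of that cycle, which is a left-to-right maximum. This gives the following description.
\begin{itemize}
\item[(a)] If $\hat{\pi}(x)=y$ with $y<x$, then $y$ cannot be the first element of its cycle, since the first element is the largest in its cycle and $y$ lies in the same cycle as $x$. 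Hence $y$ is not a left-to-right maximum, and $y$ immediately follows $x$ in $\pi$. Conversely, if $y$ immediately follows $x$ in $\pi$ and $y<x$, then $y$ is not a left-to-right maximum. So $x,y$ lie in the same cycle and $\hat{\pi}(x)=y$.
\item[(b)] If $y$ immediately follows $x$ in $\pi$ and $x<y$, then $\hat{\pi}(x)=y$ whenever $y$ is not a left-to-right maximum. Such adjacent ascents are exactly the cases in which $\hat{\pi}$ fails to read off the next letter.
\end{itemize}

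\textbf{Part (i).} Here $r>\nu_i$ and the arrow is $r\to\nu_i$, that is, $\hat{\pi}(x_r)=x_{\nu_i}$ with $x_{\nu_i}<x_r$. By (a) this is equivalent to $x_{\nu_i}$ immediately following $x_r$ in $\pi$. So an occurrence of $(\nu;r\to\nu_i)$ is a set $X$ together with an occurrence $x_{\nu_1}\cdots x_{\nu_{k-1}}$ of $\nu$ in which $x_r$ sits directly before $x_{\nu_i}$. Since $x_r$ sits immediately to the left of $x_{\nu_i}$, its position lies strictly between those of $x_{\nu_{i-1}}$ and $x_{\nu_i}$. Inserting it gives a subsequence order-isomorphic to $\nu_1\cdots\nu_{i-1}r\nu_i\cdots\nu_{k-1}$ with $r,\nu_i$ adjacent, which is an occurrence of the vincular pattern. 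Conversely, an occurrence of the vincular pattern produces $X$ and adjacency, and (a) gives the arrow. Hence the two avoidance sets coincide.

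\textbf{Part (ii).} Here $r<\nu_i$ and the arrow is $\nu_i\to r$, that is, $\hat{\pi}(x_{\nu_i})=x_r$ with $x_r<x_{\nu_i}$. By (a) this means $x_r$ immediately follows $x_{\nu_i}$. Inserting $x_r$ right after $x_{\nu_i}$ yields an occurrence of $\nu_1\cdots\overline{\nu_i r}\cdots\nu_{k-1}$, and the converse is identical.

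The main point needing care is the claim in (a) that a descent arrow $\hat{\pi}(x)=y$ with $y<x$ forces adjacency in $\pi$. It rests on the fact that only a cycle's maximum can be the image of that cycle's last element. This is exactly why the hypotheses orient each arrow from the larger to the smaller value. The rest is bookkeeping of positions, using that the classical pattern conditions transfer verbatim.
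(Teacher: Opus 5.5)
Your argument is correct. The key step is that $\hat{\pi}(x)=y$ with $y<x$ prevents $y$ from being the maximal first entry of its cycle, so $y$ must sit immediately after $x$ in $\pi$; the converse holds because $y$ then fails to be a left-to-right maximum. Your positional bookkeeping is also sound.

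The paper does not prove this lemma; it only cites it from Archer and Laudone. Its proof of the companion Lemma~\ref{lem:arrow-vincular} runs along the same lines, with the hypothesis $\nu_i\notin\mathrm{Lmax}(\nu)$ playing the role that $x_{\nu_i}<x_r$ plays in yours.
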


For a sequence of distinct integers $w = w_1w_2\cdots w_m$, we say that $w_i$ is a \emph{left-to-right maximum} if $w_i > w_j$ for all $j < i$. 
We denote by $\mathrm{Lmax}(w)$ the set of all left-to-right maxima of $w$. 

\begin{lemma}\label{lem:arrow-vincular}
	Let $\nu = \nu_1\nu_2\cdots\nu_{k-1}$ be a word on $[k]$ with distinct integers, and let $r$ be the unique element of $[k]\setminus\{\nu_1,\ldots,\nu_{k-1}\}$. Then:
	\begin{enumerate}
		\item[(i)] if $\nu_i \notin \operatorname{Lmax}(\nu)$, then $\mathcal{S}_n(\nu; r\to \nu_i) = \mathcal{S}_n(\nu_1\cdots\nu_{i-1}\overline{r\nu_i}\nu_{i+1}\cdots\nu_{k-1})$;
		\item[(ii)] if $r < \max(\nu_1,\ldots,\nu_{i-1})$, then $\mathcal{S}_n(\nu; \nu_i\to r) = \mathcal{S}_n(\nu_1\cdots\nu_{i-1}\overline{\nu_i r}\nu_{i+1}\cdots\nu_{k-1})$.
	\end{enumerate}
\end{lemma}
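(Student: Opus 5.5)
The plan is a direct double inclusion. Both sides will be described as conditions on the one-line notation of $\pi$, using the explicit description of $\hat{\pi}=\theta^{-1}(\pi)$.

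From the description of $\theta^{-1}$, for distinct values $a,b$ we have $\hat{\pi}(a)=b$ in exactly one of two situations:
\begin{itemize}
\item[(a)] $b$ immediately follows $a$ in $\pi$ and $b$ is not a left-to-right maximum of $\pi$;
\item[(b)] $b$ is the left-to-right maximum that opens the segment (cycle) containing $a$, and $a$ is the last entry of that segment.
\end{itemize}
In case (b), $b$ lies weakly to the left of $a$, and every entry strictly between them belongs to the same cycle, so it is smaller than $b$. I will show that case (b) cannot occur under the hypotheses, and that case (a) is precisely the adjacency demanded by the vincular pattern.

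\emph{Part (i).} Take an occurrence of the vincular pattern: positions $t_1<\cdots<t_{k-1}$ carrying $x_{\nu_1},\dots,x_{\nu_{k-1}}$, with $x_r$ sitting at position $t_i-1$. Since $\nu_i\notin\operatorname{Lmax}(\nu)$, there is some $j<i$ with $\nu_j>\nu_i$. Hence the entry $x_{\nu_j}$ lies to the left of $x_{\nu_i}$ and is larger, so $x_{\nu_i}$ is not a left-to-right maximum of $\pi$. By (a), $\hat{\pi}(x_r)=x_{\nu_i}$, which gives an occurrence of the arrow pattern.

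Conversely, suppose $\pi$ contains the arrow pattern, so $\hat{\pi}(x_r)=x_{\nu_i}$. Case (b) would make $x_{\nu_i}$ a left-to-right maximum, but the same $x_{\nu_j}$ rules this out. So case (a) holds: $x_r$ sits at position $t_i-1$. This position is strictly greater than $t_{i-1}$, because the values $x_r$ and $x_{\nu_{i-1}}$ are distinct. Inserting $x_r$ there therefore produces an occurrence of $\nu_1\cdots\nu_{i-1}\overline{r\nu_i}\nu_{i+1}\cdots\nu_{k-1}$.

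\emph{Part (ii).} Fix $j<i$ with $\nu_j>r$. In a vincular occurrence, $x_r$ sits at position $t_i+1$ and $x_{\nu_j}>x_r$ precedes it, so $x_r$ is not a left-to-right maximum. By (a), $\hat{\pi}(x_{\nu_i})=x_r$.

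Conversely, assume $\hat{\pi}(x_{\nu_i})=x_r$. If case (a) holds, $x_r$ is at position $t_i+1<t_{i+1}$ and we obtain the vincular occurrence. The main obstacle is excluding case (b). In that case $x_r$ is a left-to-right maximum located at or before $x_{\nu_i}$, and since $x_r\neq x_{\nu_i}$ it is strictly before. Now look at where $x_{\nu_j}$ sits.
\begin{itemize}
\item If $x_{\nu_j}$ lies left of $x_r$, then $x_r$ is not a left-to-right maximum, a contradiction.
\item Otherwise $x_{\nu_j}$ lies strictly between $x_r$ and $x_{\nu_i}$. It is then in the same cycle segment as $x_r$, so $x_{\nu_j}<x_r$, contradicting $\nu_j>r$.
\end{itemize}
Both possibilities are impossible, so case (b) is excluded and the two sets coincide.
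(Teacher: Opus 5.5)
Your proof is correct and follows the paper's double-inclusion argument. Like the paper, you split $\hat{\pi}(a)=b$ into two cases: either $b$ immediately follows $a$ and is not a left-to-right maximum, or $b$ opens the cycle that $a$ closes. You rule out the second case using the hypothesis on $\nu$, and you also write out part (ii), which the paper only calls analogous, correctly excluding that case by placing $x_{\nu_j}$ either before $x_r$ or inside its cycle segment.
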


\pf
We prove only (i), as the proof of (ii) is analogous.
To establish the first inclusion, let $\pi \in \mathcal{S}_n(\nu; r \to \nu_i)$. Then there exists a set $X = \{x_1 < x_2 < \cdots < x_k\}$ such that the subsequence $x_{\nu_1}x_{\nu_2}\cdots x_{\nu_{k-1}}$ occurs in $\pi$, and $\hat{\pi}(x_r) = x_{\nu_i}$. Since $\nu_i \notin \mathrm{Lmax}(\nu)$, the element $x_{\nu_i}$ cannot be the first element of its cycle in $\hat{\pi}$. Hence $x_{\nu_i}$ must immediately follow $x_r$ in $\pi$. Therefore $\pi$ contains the vincular pattern $\nu_1\cdots \nu_{i-1}\overline{r\nu_i}\nu_{i+1}\cdots\nu_{k-1}$.

For the reverse inclusion, suppose $\pi \in \mathcal{S}_n(\nu_1\cdots \nu_{i-1}\overline{r\nu_i}\nu_{i+1}\cdots\nu_{k-1})$. Then there exists $X = \{x_1 <x_2 < \cdots < x_k\}$ such that the subsequence
$x_{\nu_1}\cdots x_{\nu_{i-1}}x_r x_{\nu_i}x_{\nu_{i+1}}\cdots x_{\nu_{k-1}}$
occurs in $\pi$, with $x_r$ and $x_{\nu_i}$ adjacent. 
Since $x_r$ appears immediately before $x_{\nu_i}$ in $\pi$, either they belong to the same cycle of $\hat{\pi}$ with $\hat{\pi}(x_r) = x_{\nu_i}$, or $x_{\nu_i}$ is the first element of a new cycle immediately following $x_r$.
The latter case, however, would require $x_{\nu_i}$ to be a left-to-right maximum in $\pi$, which would imply $\nu_i \in \mathrm{Lmax}(\nu)$, contradicting our assumption.
 Hence they must lie in the same cycle and $\hat{\pi}(x_r) = x_{\nu_i}$. Therefore $\pi$ contains the arrow pattern $(\nu; r \to \nu_i)$. This proves the reverse inclusion and completes the proof.
\qed

For a word $\nu = \nu_1\nu_2\cdots\nu_k$, we denote by $\nu^{(i,r)}$ the word obtained from $\nu$ by replacing the entry $\nu_i$ with $r$. That is,
\[
\nu^{(i,r)} = \nu_1\cdots \nu_{i-1}\, r\, \nu_{i+1}\cdots \nu_k.
\]

\begin{proposition}\label{prop:ex}
	Let $\nu = \nu_1\nu_2\cdots\nu_{k-1}$ be a word on $[k]$ with distinct  integers, and let $r$ be the unique element of $[k]\setminus\{\nu_1,\ldots,\nu_{k-1}\}$. 
	If $r > \nu_i$ or $\nu_i \notin \mathrm{Lmax}(\nu)$, then we have 
	\[(\nu; r\to \nu_i) \sim (\nu^{(i,r)}; r\to \nu_i)\sim (\nu_1\cdots \nu_{i-1}r\nu_{i}\nu_{i+1}\cdots\nu_{k-1}; r\to \nu_i).\]
\end{proposition}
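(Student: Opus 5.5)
The plan is to prove something stronger than arrow-Wilf equivalence: for every $n$ the three avoidance classes coincide, and each equals $\mathcal{S}_n(\mu)$, where $\mu=\nu_1\cdots\nu_{i-1}\overline{r\nu_i}\nu_{i+1}\cdots\nu_{k-1}$ is the vincular pattern. For the first pattern this is already known. If $r>\nu_i$ it is Lemma~\ref{lem:Archer}(i), and if $\nu_i\notin\mathrm{Lmax}(\nu)$ it is Lemma~\ref{lem:arrow-vincular}(i). So it remains to show that a permutation $\pi$ contains $(\nu^{(i,r)}; r\to\nu_i)$ if and only if it contains $\mu$, and the same for $(\nu_1\cdots\nu_{i-1}r\nu_i\cdots\nu_{k-1}; r\to\nu_i)$.

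The key observation, used in both directions, is a structural fact about $\theta^{-1}$. Suppose $\hat\pi(a)=b$. Then either $b$ immediately follows $a$ in the one-line notation of $\pi$, or $a$ is the last entry of its cycle. In the second case $b$ is the cycle maximum: it is a left-to-right maximum of $\pi$, it appears before $a$, and every entry of $\pi$ up to and including $a$ is at most $b$.

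\emph{From $\mu$ to the two arrow patterns.} Take an occurrence of $\mu$ with $x_r$ immediately followed by $x_{\nu_i}$. The proofs of the lemmas show that $\hat\pi(x_r)=x_{\nu_i}$. The only alternative is that $x_{\nu_i}$ starts a new cycle, which makes it a left-to-right maximum of $\pi$. That is impossible if $x_{\nu_i}<x_r$, and also impossible if some earlier $x_{\nu_j}$ with $j<i$ exceeds $x_{\nu_i}$. Keeping all $k$ entries then gives an occurrence of the third pattern. Deleting $x_{\nu_i}$ from the subsequence leaves $x_r$ in position $i$, which gives an occurrence of $\nu^{(i,r)}$ together with the arrow, i.e.\ the second pattern.

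\emph{From the arrow patterns to $\mu$.} This is the main step. Take an occurrence of either pattern; in both cases $\hat\pi(x_r)=x_{\nu_i}$ and $x_r$ is preceded by $x_{\nu_1},\dots,x_{\nu_{i-1}}$. I will show $x_{\nu_i}$ immediately follows $x_r$, using the dichotomy above. Suppose instead that $x_{\nu_i}$ is the maximum of the cycle of $x_r$. If $r>\nu_i$, this contradicts $x_r<x_{\nu_i}$. If $\nu_i\notin\mathrm{Lmax}(\nu)$, there is $j<i$ with $\nu_j>\nu_i$, so $x_{\nu_j}>x_{\nu_i}$ appears before $x_r$. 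This contradicts the fact that every entry up to $x_r$ is at most $x_{\nu_i}$. Hence $x_{\nu_i}$ sits in the position right after $x_r$. For the second pattern, that position lies before $x_{\nu_{i+1}}$, since $x_{\nu_{i+1}}$ comes after $x_r$ and is distinct from $x_{\nu_i}$. Inserting $x_{\nu_i}$ into the subsequence therefore yields an occurrence of $\mu$. For the third pattern the subsequence is already an occurrence of $\mu$.

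The only delicate point is this adjacency argument in the case $\nu_i\notin\mathrm{Lmax}(\nu)$, where the cycle-maximum alternative has to be ruled out by the larger entry $x_{\nu_j}$ to the left. Everything else is bookkeeping of positions.
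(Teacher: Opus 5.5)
Your proof is correct and takes essentially the same route as the paper. Both arguments show that all three avoidance classes equal $\mathcal{S}_n(\nu_1\cdots\nu_{i-1}\overline{r\nu_i}\nu_{i+1}\cdots\nu_{k-1})$, using Lemma~\ref{lem:Archer}(i) or Lemma~\ref{lem:arrow-vincular}(i) for the first pattern. For the second pattern the paper instead cites part (ii) of those lemmas applied to $\nu^{(i,r)}$, whose missing letter is $\nu_i$; that is exactly the adjacency argument you re-derive, and the paper handles the third pattern by the same argument you give.
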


\begin{proof}
	We prove only the case $\nu_i \notin \mathrm{Lmax}(\nu)$; the case $r > \nu_i$ is analogous.	
	By Lemma~\ref{lem:arrow-vincular}(i), we have
	\[
	\mathcal{S}_n(\nu; r \to \nu_i) = \mathcal{S}_n(\nu_1\cdots \nu_{i-1}\overline{r\nu_i}\nu_{i+1}\cdots\nu_{k-1}).
	\]
	By Lemma~\ref{lem:arrow-vincular}(ii), we have 
	\[
	\mathcal{S}_n(\nu^{(i,r)}; r \to \nu_i) = \mathcal{S}_n(\nu_1\cdots \nu_{i-1}\overline{r\nu_i}\nu_{i+1}\cdots\nu_{k-1}).
	\]
	Hence $(\nu; r\to \nu_i) \sim (\nu^{(i,r)}; r\to \nu_i)$.
	
	It remains to establish the second equivalence. Let $\mu = \nu_1\cdots \nu_{i-1}r\nu_i\nu_{i+1}\cdots\nu_{k-1}$. By the same argument as in the proof of Lemma~\ref{lem:arrow-vincular}, we obtain
	\[
	\mathcal{S}_n(\mu; r \to \nu_i) = \mathcal{S}_n(\nu_1\cdots \nu_{i-1}\overline{r\nu_i}\nu_{i+1}\cdots\nu_{k-1}).
	\]
	Therefore $(\nu; r\to \nu_i) \sim (\mu; r\to \nu_i)$. Combining the two equivalences yields the desired result.
\end{proof}

We recall some standard notation and operations on permutations. For a permutation $\pi = \pi_1\pi_2\cdots\pi_n \in \mathcal{S}_n$, the \emph{reverse} of $\pi$, denoted $\pi^r$, is the permutation $\pi_n\pi_{n-1}\cdots\pi_1$; the \emph{complement} of $\pi$, denoted $\pi^c$, is the permutation $(n+1-\pi_1)(n+1-\pi_2)\cdots(n+1-\pi_n)$; and the \emph{reverse-complement} of $\pi$, denoted $\pi^{rc}$, is $(\pi^r)^c = (\pi^c)^r$. 
These operations extend naturally to arrow patterns, as defined below.
Let $\alpha = (\nu; H)$ be an arrow pattern in $\mathcal{A}_k$, where $\nu = \nu_1\nu_2\cdots \nu_m$ and $H = \{b_i \to c_i : 1 \leq i \leq t\}$. The \emph{reverse} of $\alpha$, denoted by $\alpha^r$, is the arrow pattern $(\nu^r; H^r)$, where $\nu^r = \nu_m \nu_{m-1} \cdots \nu_1$ and $H^r = \{c_i \to b_i : 1 \leq i \leq t\}$.
The \emph{complement} of $\alpha$, denoted by $\alpha^c$, is the arrow pattern $(\nu^c; H^c)$, where $\nu^c = (k+1-\nu_1)(k+1-\nu_2)\cdots(k+1-\nu_m)$ and $H^c = \{k+1-b_i \to k+1-c_i : 1 \leq i \leq t\}$.
The \emph{reverse-complement} of $\alpha$, denoted by $\alpha^{rc}$, is the arrow pattern obtained by applying reverse and complement successively, i.e., $\alpha^{rc} = (\alpha^r)^c = (\alpha^c)^r$.


\begin{proposition}\label{prop:completment}
	Let $\nu = \nu_1\nu_2\cdots\nu_{k-1}$ be a word on $[k]$ with distinct integers, and let $r$ be the unique element of $[k]\setminus\{\nu_1,\ldots,\nu_{k-1}\}$. 
	For $\alpha = (\nu; r\to \nu_i)$ with 
     $r > \nu_i$ for some $i$,  we have 
     $\alpha \sim  \alpha^{rc}$.
\end{proposition}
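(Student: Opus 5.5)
The plan is to convert both $\alpha$ and $\alpha^{rc}$ into vincular patterns using Lemma~\ref{lem:Archer}. I will then check that the two resulting vincular patterns are reverse-complements of each other. Reverse-complement is a bijection on $\mathcal{S}_n$, so that will finish the proof.

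\emph{Step 1.} Since $r>\nu_i$, Lemma~\ref{lem:Archer}(i) gives
\[
\mathcal{S}_n(\alpha)=\mathcal{S}_n(\sigma),\qquad \sigma=\nu_1\cdots\nu_{i-1}\overline{r\nu_i}\nu_{i+1}\cdots\nu_{k-1}.
\]

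\emph{Step 2.} Compute $\alpha^{rc}$ explicitly. Write $\nu'_j=k+1-\nu_{k-j}$ for $1\le j\le k-1$, so that
\[
\nu^{rc}=\nu'_1\nu'_2\cdots\nu'_{k-1}=(k+1-\nu_{k-1})\cdots(k+1-\nu_1).
\]
Reversing the arrow turns $r\to\nu_i$ into $\nu_i\to r$, and complementing then gives $\alpha^{rc}=(\nu^{rc};\,k+1-\nu_i\to k+1-r)$.

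The unique element of $[k]$ missing from $\nu^{rc}$ is $r'=k+1-r$. The entry $k+1-\nu_i$ sits in position $i'=k-i$ of $\nu^{rc}$. Since $r>\nu_i$, we have $r'=k+1-r<k+1-\nu_i=\nu'_{i'}$. Thus $\alpha^{rc}$ has the form $(\nu^{rc};\nu'_{i'}\to r')$ with $r'<\nu'_{i'}$, and Lemma~\ref{lem:Archer}(ii) gives
\[
\mathcal{S}_n(\alpha^{rc})=\mathcal{S}_n(\tau),\qquad \tau=\nu'_1\cdots\nu'_{i'-1}\,\overline{\nu'_{i'}\,r'}\,\nu'_{i'+1}\cdots\nu'_{k-1}.
\]

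\emph{Step 3.} Verify that $\tau=\sigma^{rc}$ as vincular patterns. Reversing $\sigma$ gives $\nu_{k-1}\cdots\nu_{i+1}\,\overline{\nu_i r}\,\nu_{i-1}\cdots\nu_1$, with the bar still joining the now-adjacent pair $\nu_i,r$. Complementing entrywise with respect to $k$ yields exactly $\tau$. The entries before the barred pair are $\nu'_1,\dots,\nu'_{i'-1}$, which are the complements of $\nu_{k-1},\dots,\nu_{i+1}$, and the barred pair becomes $\overline{(k+1-\nu_i)(k+1-r)}$.

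\emph{Step 4.} Observe that $\pi\mapsto\pi^{rc}$ is a bijection on $\mathcal{S}_n$. It sends an occurrence of $\sigma$ in $\pi$ to an occurrence of $\sigma^{rc}$ in $\pi^{rc}$: order-isomorphism is transformed by reversal and complement, and adjacency of positions is preserved under reversal. Hence $|\mathcal{S}_n(\sigma)|=|\mathcal{S}_n(\sigma^{rc})|=|\mathcal{S}_n(\tau)|$, which gives $\alpha\sim\alpha^{rc}$.

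The only real obstacle is the index bookkeeping in Steps 2 and 3. The points to confirm are that the arrow direction flips under reversal, so that part (ii) of Lemma~\ref{lem:Archer} rather than part (i) applies to $\alpha^{rc}$, and that the missing letter and the barred adjacent pair transform compatibly. Once those are checked, the result follows from the classical symmetry of vincular patterns under reverse-complement.
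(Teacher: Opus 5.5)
Your proposal is correct and follows essentially the same route as the paper's proof. Both convert $\alpha$ to the vincular pattern $\nu_1\cdots\nu_{i-1}\overline{r\nu_i}\nu_{i+1}\cdots\nu_{k-1}$ via Lemma~\ref{lem:Archer}(i), apply the reverse-complement symmetry, and recognize the result as the vincular form of $\alpha^{rc}$ via Lemma~\ref{lem:Archer}(ii). Your version makes explicit the bookkeeping that the paper calls "a direct computation": the missing letter $r'=k+1-r$, the position $i'=k-i$, and the inequality $r'<\nu'_{i'}$ that licenses part (ii).
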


\begin{proof}
	Let $\nu' = \nu_1\cdots \nu_{i-1}\overline{r\nu_i}\nu_{i+1}\cdots\nu_{k-1}$. 
	By Lemma~\ref{lem:Archer}(i), we have
	\[
	\mathcal{S}_n(\alpha) = \mathcal{S}_n(\nu').
	\]	
	 Since reverse-complementation preserves avoidance cardinalities, we have $|\mathcal{S}_n(\nu')| = |\mathcal{S}_n((\nu')^{rc})|$. 
	 A direct computation gives
	\[
	(\nu')^{rc} = (k+1-\nu_{k-1})\cdots (k+1-\nu_{i+1})\overline{(k+1-\nu_i)(k+1-r)}(k+1-\nu_{i-1})\cdots(k+1-\nu_1).
	\]
	Applying Lemma~\ref{lem:Archer}(ii)  yields $\mathcal{S}_n((\nu')^{rc}) = \mathcal{S}_n(\alpha^{rc})$. 
	Combining all the above equalities yields the desired result.
\end{proof}

\begin{remark}\label{rem:complement}
	By interchanging the roles of $\alpha$ and $\alpha^{rc}$ in Proposition~\ref{prop:completment}, we obtain the symmetric statement: if $\alpha = (\nu; \nu_i \to r)$ with $\nu_i > r$, then $\alpha \sim \alpha^{rc}$.
\end{remark}

\begin{proposition}\label{prop:reverse}
	Let $\nu = \nu_1\nu_2\cdots\nu_{k-1}$ be a word on $[k]$ with distinct integers, and let $r$ be the unique element of $[k]\setminus\{\nu_1,\ldots,\nu_{k-1}\}$. 
	For $\alpha = (\nu; r\to \nu_i)$ with 
	$r > \nu_i$ for some $i$ and $r<max(\nu_{i+1}, \nu_{i+2}, \ldots, \nu_{k-1})$,  we have 
	$\alpha \sim  \alpha^{r}$.
\end{proposition}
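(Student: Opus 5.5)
The plan is to pass through vincular patterns on both sides: $\alpha$ becomes a vincular pattern by Lemma~\ref{lem:Archer}, its reverse is equinumerous by the reversal symmetry of classical avoidance, and $\alpha^r$ becomes that reversed vincular pattern by Lemma~\ref{lem:arrow-vincular}(ii). This mirrors the proof of Proposition~\ref{prop:completment}.

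\emph{Step 1.} Set
\[
\nu' = \nu_1\cdots\nu_{i-1}\overline{r\nu_i}\nu_{i+1}\cdots\nu_{k-1}.
\]
Since $r>\nu_i$, Lemma~\ref{lem:Archer}(i) gives $\mathcal{S}_n(\alpha)=\mathcal{S}_n(\nu')$.

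\emph{Step 2.} The map $\pi\mapsto\pi^r$ is a bijection from $\mathcal{S}_n(\nu')$ to $\mathcal{S}_n((\nu')^r)$. Here $(\nu')^r$ is the reversed word, and the overline still joins the same two (now swapped) adjacent entries:
\[
(\nu')^r=\nu_{k-1}\cdots\nu_{i+1}\overline{\nu_i r}\,\nu_{i-1}\cdots\nu_1 .
\]
This holds because an occurrence of a vincular pattern with two entries in consecutive positions reverses to an occurrence with those entries still in consecutive positions, in reversed order. Hence $|\mathcal{S}_n(\nu')|=|\mathcal{S}_n((\nu')^r)|$.

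\emph{Step 3.} By definition, $\alpha^r=(\nu^r;\nu_i\to r)$ with $\nu^r=\nu_{k-1}\cdots\nu_1$. In $\nu^r$ the entry $\nu_i$ sits at position $k-i$, and the entries preceding it are exactly $\nu_{k-1},\ldots,\nu_{i+1}$. The hypothesis $r<\max(\nu_{i+1},\ldots,\nu_{k-1})$ is precisely the condition of Lemma~\ref{lem:arrow-vincular}(ii) applied to $\nu^r$ at that position. That lemma gives
\[
\mathcal{S}_n(\alpha^r)=\mathcal{S}_n(\nu_{k-1}\cdots\nu_{i+1}\overline{\nu_i r}\,\nu_{i-1}\cdots\nu_1)=\mathcal{S}_n((\nu')^r).
\]
Chaining Steps 1--3 yields $|\mathcal{S}_n(\alpha)|=|\mathcal{S}_n(\alpha^r)|$ for all $n$, that is, $\alpha\sim\alpha^r$.

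The step requiring the most care is Step 3, where the index bookkeeping must be checked. After reversal, the roles of ``entries before $\nu_i$'' and ``entries after $\nu_i$'' swap. This swap is exactly why the hypothesis is on $\nu_{i+1},\ldots,\nu_{k-1}$ rather than on $\nu_1,\ldots,\nu_{i-1}$. The arrow also flips to $\nu_i\to r$ with $\nu_i<r$, so Lemma~\ref{lem:Archer}(ii) does not apply and the weaker Lmax-type criterion of Lemma~\ref{lem:arrow-vincular}(ii) is needed instead. No further obstacle is expected.
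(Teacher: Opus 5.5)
Your proposal is correct and follows the paper's proof step for step. Lemma~\ref{lem:Archer}(i) turns $\alpha$ into the vincular pattern, reversal gives equinumerosity, and Lemma~\ref{lem:arrow-vincular}(ii) applied to $\nu^r$ (where the entries preceding $\nu_i$ are $\nu_{k-1},\ldots,\nu_{i+1}$) identifies the reversed vincular class with $\mathcal{S}_n(\alpha^r)$; your explicit index bookkeeping in Step 3 simply spells out what the paper leaves implicit.
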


\begin{proof}
	We have
	\begin{align*}
		|\mathcal{S}_n(\alpha)|
		&= |\mathcal{S}_n(\nu_1\cdots \nu_{i-1}\overline{r\nu_i}\nu_{i+1}\cdots\nu_{k-1})|  
		&&(\text{by Lemma~\ref{lem:Archer}(i)}) \\
		&= |\mathcal{S}_n(\nu_{k-1}\cdots \nu_{i+1}\overline{\nu_i r}\nu_{i-1}\cdots\nu_1)|  
		&&(\text{by reversal}) \\
		&= |\mathcal{S}_n(\alpha^r)|  
		&&(\text{by Lemma~\ref{lem:arrow-vincular}(ii)}), 
	\end{align*}
	as desired.
\end{proof}

\begin{proposition}
	Let $\nu = \nu_1\nu_2\cdots\nu_{k-1}$ be a word on $[k]$ with distinct integers, and let $r$ be the unique element of $[k]\setminus\{\nu_1,\ldots,\nu_{k-1}\}$. 
	Suppose that $r < \nu_i$ for some $i$ and $\nu_i\nin \mathrm{Lmax}(\nu)$.
	For $\alpha = (\nu; r\to \nu_i)$,  we have 
	$\alpha \sim  \alpha^r \sim \alpha^{c}$.
\end{proposition}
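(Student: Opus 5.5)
The plan is to reduce all three arrow patterns to vincular patterns that differ only by reversal or complementation, and then use the fact that these symmetries preserve avoidance counts. This is the same strategy as in Propositions~\ref{prop:completment} and~\ref{prop:reverse}. Throughout, let
\[
\nu' = \nu_1\cdots\nu_{i-1}\overline{r\nu_i}\nu_{i+1}\cdots\nu_{k-1}.
\]

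First I handle $\alpha$ itself. Since $\nu_i\notin\mathrm{Lmax}(\nu)$, Lemma~\ref{lem:arrow-vincular}(i) gives $\mathcal{S}_n(\alpha)=\mathcal{S}_n(\nu')$. This is the only place the hypothesis $\nu_i\notin\mathrm{Lmax}(\nu)$ is needed. Lemma~\ref{lem:Archer}(i) cannot be used here, because it requires $r>\nu_i$, whereas we have $r<\nu_i$.

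Next, the reverse. We have $\alpha^r=(\nu^r;\nu_i\to r)$, and $r$ is still the missing letter of $\nu^r$. Since $r<\nu_i$, Lemma~\ref{lem:Archer}(ii) applies directly and gives
\[
\mathcal{S}_n(\alpha^r)=\mathcal{S}_n(\nu_{k-1}\cdots\nu_{i+1}\overline{\nu_i r}\nu_{i-1}\cdots\nu_1).
\]
The vincular pattern on the right is exactly $(\nu')^r$. Reversal preserves avoidance cardinalities, and it maps the adjacency overline to an adjacency overline. Hence $|\mathcal{S}_n(\alpha^r)|=|\mathcal{S}_n(\nu')|=|\mathcal{S}_n(\alpha)|$.

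Finally, the complement. We have $\alpha^c=(\nu^c;\,k+1-r\to k+1-\nu_i)$, and the missing letter of $\nu^c$ is $k+1-r$. From $r<\nu_i$ we get $k+1-r>k+1-\nu_i$, so Lemma~\ref{lem:Archer}(i) applies. It yields
\[
\mathcal{S}_n(\alpha^c)=\mathcal{S}_n\bigl((k+1-\nu_1)\cdots\overline{(k+1-r)(k+1-\nu_i)}\cdots(k+1-\nu_{k-1})\bigr),
\]
and the pattern on the right is precisely $(\nu')^c$. Complementation also preserves avoidance counts, so $\alpha^c\sim\alpha$.

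There is no real obstacle beyond choosing the right lemma at each step. Lemma~\ref{lem:arrow-vincular} is needed for $\alpha$ itself. For $\alpha^c$, the Lmax condition need not transfer to $\nu^c$, since it would require some $j<i$ with $\nu_j<\nu_i$. So for both $\alpha^r$ and $\alpha^c$ the relevant inequality is $r<\nu_i$, which lets Lemma~\ref{lem:Archer} apply in each case.
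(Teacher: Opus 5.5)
Your proof is correct and is essentially the paper's argument: both reduce $\alpha$ to $\nu'$ via Lemma~\ref{lem:arrow-vincular}(i) and $\alpha^r$ to $(\nu')^r$ via Lemma~\ref{lem:Archer}(ii), and then use the fact that symmetries preserve vincular avoidance counts. The only difference is cosmetic. For $\alpha^c$ you apply Lemma~\ref{lem:Archer}(i) directly to obtain $(\nu')^c$, whereas the paper applies Remark~\ref{rem:complement} to $\alpha^r$ and uses $(\alpha^r)^{rc}=\alpha^c$, which unwinds to the same computation.
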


\begin{proof}
	We have
	\begin{align*}
		|\mathcal{S}_n(\alpha)|
		&= |\mathcal{S}_n(\nu_1\cdots \nu_{i-1}\overline{r\nu_i}\nu_{i+1}\cdots\nu_{k-1})|  
		&&(\text{by Lemma~\ref{lem:arrow-vincular}(i)}) \\
		&= |\mathcal{S}_n(\nu_{k-1}\cdots \nu_{i+1}\overline{\nu_i r}\nu_{i-1}\cdots\nu_1)|  
		&&(\text{by reversal}) \\
		&= |\mathcal{S}_n(\alpha^r)|  
		&&(\text{by Lemma~\ref{lem:Archer}(ii)}) \\ 
		&= |\mathcal{S}_n(\alpha^c)|  
		&&(\text{by Remark~\ref{rem:complement} and the fact $(\alpha^r)^{rc} = \alpha^c)$},
	\end{align*}
	as desired.
\end{proof}

\begin{proposition}\label{prop:}
	Let $\nu = \nu_1\nu_2\cdots \nu_{k-2}$ be a word on $[k]$ with distinct integers, and let $i, j$ be two distinct integers in $[k]\setminus\{\nu_1,\ldots,\nu_{k-2}\}$ such that $\nu_1 > j$. 
	Then
	\[
	\mathcal{S}_n(i\nu;\ i \to j) = \mathcal{S}_n(j\nu;\ i \to j).
	\]
\end{proposition}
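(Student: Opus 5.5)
The plan is to prove the two inclusions directly, using the same set $X=\{x_1<\cdots<x_k\}$ on both sides. The only change between the two occurrences is whether we read $x_i$ or $x_j$ as the first letter of the one-line occurrence. The tool is the description of $\hat\pi(a)=b$ in terms of the one-line word $\pi$ under $\theta$. Cut $\pi$ into blocks, each starting at a left-to-right maximum. Then $\hat\pi(a)=b$ holds in exactly two situations:
\begin{itemize}
\item[(a)] $b$ is not a left-to-right maximum and $b$ immediately follows $a$ in $\pi$;
\item[(b)] $b$ is the left-to-right maximum starting the block that contains $a$, and $a$ is the last entry of that block.
\end{itemize}
In case (b), $b$ appears weakly before $a$ in $\pi$, and every entry of that block after $b$ is smaller than $b$. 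This is the same dichotomy already used in the proof of Lemma~\ref{lem:arrow-vincular}.

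\emph{First inclusion (no hypothesis on $\nu_1$ needed).} Suppose $\pi$ contains $(i\nu; i\to j)$. Then $x_i$ sits at some position $p$, the entries $x_{\nu_1},\ldots,x_{\nu_{k-2}}$ occur in this order at positions $>p$, and $\hat\pi(x_i)=x_j$.
\begin{itemize}
\item In case (a), $x_j$ is at position $p+1$. Since $x_j$ is distinct from the $\nu$-entries, those entries lie at positions $>p+1$.
\item In case (b), $x_j$ lies at a position $\le p$.
\end{itemize}
Either way $x_j$ precedes every $\nu$-entry, so $x_jx_{\nu_1}\cdots x_{\nu_{k-2}}$ is an occurrence of $j\nu$. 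Together with $\hat\pi(x_i)=x_j$, this shows that $\pi$ contains $(j\nu; i\to j)$. Hence $\mathcal{S}_n(j\nu; i\to j)\subseteq \mathcal{S}_n(i\nu; i\to j)$.

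\emph{Reverse inclusion (this is where $\nu_1>j$ is used).} Suppose $x_j$ is at position $q$, the $\nu$-entries follow it, and $\hat\pi(x_i)=x_j$.
\begin{itemize}
\item In case (a), $x_i$ is at position $q-1$, so it precedes all $\nu$-entries.
\item In case (b), $x_j$ begins a block and $x_i$ is the last entry of that block; this is the step I expect to need care. Because $\nu_1>j$, we have $x_{\nu_1}>x_j$, and $x_{\nu_1}$ occurs after $x_j$. Every entry of the block of $x_j$ that comes after $x_j$ is smaller than $x_j$, so $x_{\nu_1}$ cannot lie in that block. Hence $x_{\nu_1}$ comes after the block ends, and in particular after its last entry $x_i$.
\end{itemize}
In both cases $x_i$ precedes $x_{\nu_1}$, hence all $\nu$-entries. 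So $x_ix_{\nu_1}\cdots x_{\nu_{k-2}}$ is an occurrence of $i\nu$, and $\pi$ contains $(i\nu;i\to j)$.

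Combining the two inclusions gives equality of the containment sets, hence $\mathcal{S}_n(i\nu;\ i \to j) = \mathcal{S}_n(j\nu;\ i \to j)$. The main obstacle is the block argument in case (b) of the reverse direction: a priori $x_i$ could appear anywhere after $x_j$, possibly after some $\nu$-entries. The hypothesis $\nu_1>j$ is exactly what forces the block of $x_j$ to close before $x_{\nu_1}$ appears.
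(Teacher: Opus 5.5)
Your proof is correct and follows essentially the same route as the paper's. In the forward direction, $x_j$ either immediately follows $x_i$ or heads its cycle to the left of $x_i$. In the reverse direction, $\nu_1>j$ prevents $x_{\nu_1}$ from lying inside the block headed by $x_j$ before its last entry $x_i$; the paper argues this by contradiction, while you use a direct case split on the block description of $\hat\pi(x_i)=x_j$.
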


\begin{proof}
	Suppose first that $\pi$ contains $(i \nu;\ i \to j)$. Then there exists a set $X = \{x_1 < \cdots < x_k\}$ such that the subsequence $x_i x_{\nu_1} \cdots x_{\nu_{k-2}}$ occurs in $\pi$ and $\hat{\pi}(x_i) = x_j$.
	It follows  that the elements $x_i$ and $x_j$ lie in the same cycle of $\hat{\pi}$. 
	In the one-line notation of $\pi$, this implies that either $x_j$ immediately follows $x_i$, or $x_j$ appears to the left of $x_i$ as the first element of that cycle. 
	In either case,  $(x_j x_{\nu_1} \cdots x_{\nu_{k-2}}; x_i\to x_j)$ forms an occurrence of $(j \nu;\ i \to j)$.
	
Conversely, suppose that $\pi$ contains $(j\nu;\ i \to j)$. Then there exists $X = \{x_1 < \cdots < x_k\}$ such that the subsequence $x_j x_{\nu_1} \cdots x_{\nu_{k-2}}$ occurs in $\pi$ and $\hat{\pi}(x_i) = x_j$. We claim that $x_i$ appears to the left of $x_{\nu_1}$ in $\pi$. 
If not, since $x_j$ appears before $x_{\nu_1}$ in $\pi$ and $\hat{\pi}(x_i)=x_j$, the elements $x_i$, $x_j$ and $x_{\nu_1}$ must belong to the same cycle of $\hat{\pi}$, with $x_j$ as its first element. 
But this would imply $x_j > x_{\nu_1}$, contradicting the fact that $x_j < x_{\nu_1}$ (which follows from $\nu_1 > j$). 
Hence $x_i$ must appear before $x_{\nu_1}$ in $\pi$. 
Therefore $(x_i x_{\nu_1} \cdots x_{\nu_{k-2}}; x_i\to x_j)$ forms an occurrence of $(i\nu;\ i \to j)$.
This completes the proof.
\end{proof}

\begin{proposition}\label{prop:mk}
	Let $m$ be a positive integer, and let $i, j, k \in [m-1]$. 
	Then
	\[
	(mk;\ i\to j) \sim (mk;\ j\to i).
	\]
\end{proposition}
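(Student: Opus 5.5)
The plan is to recast containment of $(mk;\,i\to j)$ as a condition on the cycle structure of $\hat{\pi}$ alone. Once the one-line part of the pattern has been removed, the statement should follow from the involution $\sigma\mapsto\sigma^{-1}$ on $\mathcal{S}_n$, which keeps the vertex set of every cycle and reverses every arrow.

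First I note what the size condition forces. The values occurring in the pattern are $\{m,k,i,j\}$ and must equal $[m]$, so $\{i,j,k\}=[m-1]$, where $k$ may coincide with $i$ or with $j$. An occurrence of $(mk;\,i\to j)$ in $\pi$ therefore consists of two pieces of data:
\begin{itemize}
\item values $x_i,x_j,x_k$ in the relative order prescribed by $i,j,k$, with $\hat{\pi}(x_i)=x_j$;
\item an entry $x_m>\max(x_i,x_j,x_k)$ appearing to the left of $x_k$ in $\pi$.
\end{itemize}
Because $x_m$ is larger than the other chosen values, it is automatically distinct from them. So the second requirement just says that some entry larger than $M:=\max(x_i,x_j,x_k)$ precedes $x_k$ in $\pi$.

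The key step is the following claim: with $M$ as above, some entry larger than $M$ precedes $x_k$ in $\pi$ if and only if the maximum of the cycle of $\hat{\pi}$ containing $x_k$ exceeds $M$.

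To prove it, write $\pi=\theta(\hat{\pi})$, so that $\pi$ is the concatenation of the standard cycles ordered by increasing maxima.
\begin{itemize}
\item Suppose the leader $\ell$ of the cycle of $x_k$ satisfies $\ell>M\ge x_k$. Then $\ell\ne x_k$, and $\ell$ lies to the left of $x_k$, so $\ell$ is the required entry.
\item Suppose instead $\ell\le M$. The entries preceding $x_k$ lie either in earlier cycles, whose elements are all smaller than $\ell$, or in the same cycle, whose elements are at most $\ell$. Hence no entry larger than $M$ precedes $x_k$.
\end{itemize}
This left-to-right maximum bookkeeping is the one place where care is needed, in the same spirit as the proof of Lemma~\ref{lem:arrow-vincular}. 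I expect it to be the main point to verify, including the degenerate cases $k=i$ or $k=j$. In those cases the claim is applied to $x_k\in\{x_i,x_j\}$ itself, and nothing changes.

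Granting the claim, $\pi$ contains $(mk;\,i\to j)$ if and only if $\hat{\pi}$ has three values in the relative order given by $i,j,k$ such that $\hat{\pi}(x_i)=x_j$ and the cycle through $x_k$ has maximum greater than $\max(x_i,x_j,x_k)$. Now define the bijection $\Phi:\pi\mapsto\theta(\hat{\pi}^{-1})$ on $\mathcal{S}_n$. Inverting $\hat{\pi}$ preserves the vertex set, and hence the maximum, of every cycle, and it turns $\hat{\pi}(x_i)=x_j$ into $\hat{\pi}^{-1}(x_j)=x_i$. Therefore $\pi$ contains $(mk;\,i\to j)$ with the values $x_i,x_j,x_k$ if and only if $\Phi(\pi)$ contains $(mk;\,j\to i)$ with the same values; the entry playing the role of $x_m$ is found through the claim. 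So $\Phi$ restricts to a bijection $\mathcal{S}_n(mk;\,i\to j)\to\mathcal{S}_n(mk;\,j\to i)$, which gives $(mk;\,i\to j)\sim(mk;\,j\to i)$.
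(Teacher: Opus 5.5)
Your proof is correct and uses the same involution as the paper, $\pi\mapsto\theta\bigl((\hat{\pi})^{-1}\bigr)$. The one difference is in how you show the $x_m$-condition survives. The paper describes this map as fixing the left-to-right maxima and reversing the blocks between them, and chooses $x_m$ to be a left-to-right maximum. You instead rephrase the condition as ``the cycle of $x_k$ has maximum exceeding $\max(x_i,x_j,x_k)$,'' which is visibly invariant under inversion; this is the same idea in a slightly cleaner form.
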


\begin{proof}
	Define $\phi: \mathcal{S}_n \to \mathcal{S}_n$ by
	\[
	\phi(\pi) = \theta\bigl((\hat{\pi})^{-1}\bigr).
	\]
	Since both $\theta$ and inversion are bijections, $\phi$ is a bijection. 
	Moreover, $\phi^2 = \mathrm{id}$, so $\phi$ is an involution.
	The map $\phi$ admits a simple description in terms of the one-line notation of $\pi$. Suppose that the left-to-right maxima of $\pi$ occur at positions $1 = p_1 < p_2 < \cdots < p_t$. Then $\phi(\pi)$ is obtained by keeping these left-to-right maxima fixed in both position and value, and reversing each of the intervening blocks
	\[
	\pi_{p_1+1}\cdots\pi_{p_2-1},\quad
	\pi_{p_2+1}\cdots\pi_{p_3-1},\quad \ldots,\quad
	\pi_{p_t+1}\cdots\pi_n.
	\]
	For example, if $\pi = 312745698$, then $\phi(\pi) = 321765498$.
	
	We claim that $\phi$ restricts to a bijection between $\mathcal{S}_n(mk;\ i\to j)$ and $\mathcal{S}_n(mk;\ j\to i)$. Suppose first that $\pi$ contains $(mk;\ i\to j)$. Then there exists a set $X = \{x_1 < \cdots < x_m\}$ such that $x_m$ precedes $x_k$ in the one-line notation of $\pi$, and $\hat{\pi}(x_i) = x_j$. Among all such occurrences, choose one for which $x_m$ is a left-to-right maximum of $\pi$; this is possible since, if the chosen $x_m$ is not a left-to-right maximum, one may replace it by a left-to-right maximum element preceding it, which does not affect the relative order of the pattern. 
	Thus $x_m$ is fixed by the construction of $\phi$, and consequently it remains before $x_k$ in $\phi(\pi)$.
	Let $\sigma = \phi(\pi)$, so $\hat{\sigma} = (\hat{\pi})^{-1}$. Then
\[
\hat{\sigma}(x_j) = (\hat{\pi})^{-1}(x_j) = x_i.
\]
	Hence $(x_mx_k; x_j\to x_i)$ forms an occurrence of the arrow pattern
	$(mk;\ j\to i)$ in $\phi(\pi)$.
	Conversely, if $\pi$ contains $(mk;\ j\to i)$, then by the same argument, $\phi(\pi)$ contains $(mk;\ i\to j)$. 
	Since $\phi$ is an involution, it establishes a bijection between $\mathcal{S}_n(mk;\ i\to j)$ and $\mathcal{S}_n(mk;\ j\to i)$, as desired.
\end{proof}

\begin{proposition}
	Let $i, j, k$ be three distinct integers in $[3]$. Then
	\[
	(k;\ i\to j) \sim (k;\ j\to i).
	\]
\end{proposition}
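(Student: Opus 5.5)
The plan is to reuse the involution $\phi(\pi)=\theta\bigl((\hat{\pi})^{-1}\bigr)$ from the proof of Proposition~\ref{prop:mk}. The situation here is simpler than there, because the underlying classical pattern $\nu=k$ has length one. Containment of $(k;\ i\to j)$ in $\pi$ means only the following: there is a set $X=\{x_1<x_2<x_3\}\subseteq[n]$ such that $x_k$ occurs somewhere in the one-line notation of $\pi$ (which is automatic) and $\hat{\pi}(x_i)=x_j$. So the one-line condition imposes no positional constraint at all. The pattern is really a condition on the cycle structure $\hat{\pi}$ together with the relative order of three distinct values.

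First I will restate containment in this reduced form. The permutation $\pi$ contains $(k;\ i\to j)$ if and only if there are three distinct values $a,b,c\in[n]$ with $\hat{\pi}(a)=b$ whose relative order matches the ranks $i,j,k$. Concretely, $a$ is the $i$-th smallest, $b$ the $j$-th smallest and $c$ the $k$-th smallest of $\{a,b,c\}$. Since $i,j,k$ are the three distinct elements of $[3]$, this means $X=\{a,b,c\}$ with $x_i=a$, $x_j=b$, $x_k=c$.

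Next, let $\sigma=\phi(\pi)$, so that $\hat{\sigma}=(\hat{\pi})^{-1}$. If $\hat{\pi}(x_i)=x_j$, then $\hat{\sigma}(x_j)=x_i$. The same set $X$ then witnesses an occurrence of $(k;\ j\to i)$ in $\sigma$, because $x_k$ trivially appears in $\sigma$. By the symmetric argument, if $\pi$ contains $(k;\ j\to i)$ then $\phi(\pi)$ contains $(k;\ i\to j)$. Since $\phi$ is an involution, $\pi$ contains $(k;\ i\to j)$ if and only if $\phi(\pi)$ contains $(k;\ j\to i)$. Hence $\phi$ restricts to a bijection $\mathcal{S}_n(k;\ i\to j)\to\mathcal{S}_n(k;\ j\to i)$.

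There is essentially no obstacle. The only point needing care is the step in Proposition~\ref{prop:mk} where one replaces $x_m$ by a left-to-right maximum so that it is fixed by $\phi$. That step is unnecessary here, because a single letter carries no ordering requirement that $\phi$ could destroy. I will also remark that $\phi$ need not fix the position of $x_k$, since the statement does not depend on positions.
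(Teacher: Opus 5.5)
Your proof is correct and matches the paper's proof: the paper also just applies the involution $\phi(\pi)=\theta\bigl((\hat{\pi})^{-1}\bigr)$ from Proposition~\ref{prop:mk}. It uses $\hat{\sigma}=(\hat{\pi})^{-1}$ for $\sigma=\phi(\pi)$, so that $\hat{\pi}(x_i)=x_j$ becomes $\hat{\sigma}(x_j)=x_i$ on the same set $X$. You are also right that the left-to-right-maximum adjustment is unnecessary here, since a one-letter $\nu$ imposes no positional constraint, which is why the paper can say the result follows directly.
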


\begin{proof}
	This follows directly by applying the map $\phi$ defined in the proof of Proposition~\ref{prop:mk}, which gives a bijection between $\mathcal{S}_n(k;\ i\to j)$ and $\mathcal{S}_n(k;\ j\to i)$.
\end{proof}

\begin{proposition}\label{prop:1-fixed}
	Let $\nu$ be a permutation on the set $\{2,3,\ldots,k\}$. Then
	\[
	\mathcal{S}_n(\nu; 1\to 1) = \mathcal{S}_n(1\nu; 1\to 1).
	\]
\end{proposition}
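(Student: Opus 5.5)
We prove the two inclusions separately, using only the cycle structure of $\hat{\pi}$. The key observation concerns an arrow $1\to 1$ on a selected set $X=\{x_1<\cdots<x_k\}$. It says that $\hat{\pi}(x_1)=x_1$, i.e.\ $x_1$ is a fixed point of $\hat{\pi}$. In the standard cycle form, a fixed point $x$ is a singleton cycle $(x)$. Under $\theta$ this means that in the one-line notation of $\pi$, $x$ is a left-to-right maximum and the entry immediately after it (if any) is again a left-to-right maximum.

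First, suppose $\pi$ contains $(1\nu;1\to 1)$. Then there is $X$ such that $x_1x_{\nu_1}\cdots x_{\nu_{k-1}}$ occurs in $\pi$ and $\hat{\pi}(x_1)=x_1$. Deleting the first letter of this occurrence leaves $x_{\nu_1}\cdots x_{\nu_{k-1}}$, which is an occurrence of $\nu$ on the same set $X$, with the arrow $1\to 1$ still satisfied. Hence $\pi$ contains $(\nu;1\to 1)$. Taking complements of the avoidance sets, this gives $\mathcal{S}_n(\nu;1\to1)\subseteq\mathcal{S}_n(1\nu;1\to1)$.

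Conversely, suppose $\pi$ contains $(\nu;1\to 1)$, witnessed by $X$ with $x_{\nu_1}\cdots x_{\nu_{k-1}}$ occurring in $\pi$ and $x_1$ a fixed point of $\hat{\pi}$. We need $x_1$ to appear to the left of $x_{\nu_1}$ in $\pi$. Since $x_1$ forms a singleton cycle, it is a left-to-right maximum of $\pi$, so every entry to its left is smaller than $x_1$. All entries $x_{\nu_j}$ with $\nu_j\ge 2$ are larger than $x_1$, so none of them can lie to the left of $x_1$. Thus $x_1$ precedes every $x_{\nu_j}$, and $x_1x_{\nu_1}\cdots x_{\nu_{k-1}}$ is an occurrence of $1\nu$ with $\hat{\pi}(x_1)=x_1$. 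This proves the reverse inclusion and hence the equality of the two sets.

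The only delicate point is the positional claim in the converse direction. It rests on the fact that a fixed point of $\hat{\pi}$ begins its own cycle and is therefore a left-to-right maximum of $\pi=\theta(\hat{\pi})$. I would state this explicitly from the description of $\theta^{-1}$, under which the first element of each cycle is exactly a left-to-right maximum. No other step should cause trouble.
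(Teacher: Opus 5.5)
Your proof is correct and follows essentially the same route as the paper. One direction is immediate because you drop $x_1$ from an occurrence of $1\nu$. The other uses that a fixed point $x_1$ of $\hat{\pi}$ is a left-to-right maximum of $\pi$, so every larger selected entry lies to its right and $x_1$ can be prepended to the occurrence of $\nu$.
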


\begin{proof}
	It is immediate that if a permutation contains $(1\nu; 1\to 1)$, then it also contains $(\nu; 1\to 1)$.
	It remains to prove the reverse implication.
	Suppose $\pi$ contains $(\nu; 1\to 1)$. Then there exists a set $X = \{x_1 < x_2 < \cdots < x_k\}$ such that the subsequence $x_{\nu_1} x_{\nu_2} \cdots x_{\nu_{k-1}}$ occurs in $\pi$ and $\hat{\pi}(x_1) = x_1$. Since $x_1$ is a fixed point of $\hat{\pi}$, it is a left-to-right maximum of $\pi$. Consequently, all elements larger than $x_1$, including $x_{\nu_1}, \ldots, x_{\nu_{k-1}}$, must appear to the right of $x_1$. Therefore, $(x_1x_{\nu_1} x_{\nu_2} \cdots x_{\nu_{k-1}}; x_1\to x_1)$
	forms an occurrence of the pattern $(1\nu; 1\to 1)$ in $\pi$. 
	This proves the reverse implication and completes the proof.
\end{proof}

\begin{proposition}\label{lem:trivial-avoidance}
	Let $\alpha = (\nu; b\to c)\in \mathcal{A}_k$ be an arrow pattern such that $b$ and $c$ both appear in $\nu$. Then $\mathcal{S}_n(\alpha) = \mathcal{S}_n$ if any of the following conditions holds:
	\begin{enumerate}
		\item[(i)] $b > c$, and $c$ does not appear immediately to the right of $b$ in $\nu$;
		\item[(ii)] $b < c$, $b$ appears to the left of $c$ in $\nu$, and $b$ and $c$ are not adjacent in $\nu$;
		\item[(iii)] $b < c$, $b$ appears to the right of $c$ in $\nu$, and there exists an element greater than $c$ preceding $b$ in $\nu$;
		\item[(iv)] $b = c$, and $b \notin \mathrm{Lmax}(\nu)$.
	\end{enumerate}
\end{proposition}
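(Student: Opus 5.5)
The plan is to show, in each of the four cases, that no permutation $\pi$ can contain $\alpha$. Suppose $X=\{x_1<\cdots<x_k\}$ and positions give an occurrence of $\nu$ in $\pi$ with $\hat{\pi}(x_b)=x_c$. Since $b,c$ both appear in $\nu$, the one-line positions of $x_b$ and $x_c$ in $\pi$ are constrained by $\nu$. The key structural fact, read off from $\theta$, is the following. If $\hat{\pi}(x_b)=x_c$, then either $x_c$ is the element immediately to the right of $x_b$ in $\pi$ (when $x_b$ is not the last element of its cycle), or $x_b$ is the last element of its cycle. In the second case $x_c$ is the first element of that cycle. Hence $x_c$ is a left-to-right maximum of $\pi$, lies to the left of $x_b$, and is the largest element of the cycle. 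Moreover, every element strictly between $x_c$ and $x_b$ in $\pi$ belongs to the same cycle, so all of them are smaller than $x_c$. I will derive a contradiction from these two alternatives case by case.

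\textbf{Case (i).} Here $b>c$. Since $x_c<x_b$, the second alternative (where $x_c$ is the maximum of the cycle containing $x_b$) is impossible. So $x_c$ immediately follows $x_b$ in $\pi$. Then in $\nu$ the letter $c$ lies to the right of $b$ with no pattern letter between them, i.e.\ $c$ appears immediately to the right of $b$ in $\nu$. This contradicts the hypothesis.

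\textbf{Case (ii).} Here $b<c$ and $b$ is to the left of $c$, so the second alternative (which puts $x_c$ to the left of $x_b$) fails. Thus $x_c$ immediately follows $x_b$ in $\pi$, forcing $b,c$ to be adjacent in $\nu$, a contradiction.

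\textbf{Case (iii).} Since $c$ is to the left of $b$ in $\nu$, the first alternative fails and $x_c$ must be the cycle head. Some letter $d>c$ precedes $b$ in $\nu$. If $d$ precedes $c$, then $x_c$ is not a left-to-right maximum, a contradiction. Otherwise $d$ lies strictly between $c$ and $b$, so $x_d>x_c$ sits between the cycle head $x_c$ and $x_b$, again a contradiction.

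\textbf{Case (iv).} A fixed point $x_b$ of $\hat{\pi}$ is a singleton cycle, hence a left-to-right maximum of $\pi$. This forces $b\in\mathrm{Lmax}(\nu)$, a contradiction.

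The only delicate point is the precise description of the wrap-around alternative, specifically the claim that everything between the cycle head and $x_b$ is smaller than $x_c$. I would justify it directly from the definition of $\theta^{-1}$: the cycle consists of the block running from one left-to-right maximum up to the next one. Everything else is bookkeeping with positions in $\nu$.
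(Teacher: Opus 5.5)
Your proof is correct and follows the paper's approach. The paper proves only (i), using the same dichotomy: either $x_c$ immediately follows $x_b$ in $\pi$, or $x_c$ is the head of the cycle ending at $x_b$ and hence exceeds $x_b$. It leaves (ii)--(iv) as analogous, and your arguments for them are sound, including the observation needed in (iii) that every entry strictly between the cycle head and $x_b$ is smaller than $x_c$.
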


\begin{proof}
	We prove only (i); the remaining cases are analogous and are left to the reader.
	
	Assume $b > c$ and $c$ is not immediately to the right of $b$ in $\nu$. Suppose for contradiction that some permutation $\pi$ contains $(\nu; b\to c)$. Then there exists a set $X = \{x_1 < \cdots < x_k\}$ such that the subsequence $x_{\nu_1}x_{\nu_2}\cdots x_{\nu_k}$ occurs in $\pi$ and $\hat{\pi}(x_b) = x_c$. Since $\hat{\pi}(x_b) = x_c$, the element $x_c$ must appear immediately to the right of $x_b$ in the one-line notation of $\pi$; otherwise, if $x_b$ were the last element of its cycle, then $x_c$ would be the first element of that cycle, implying $x_c > x_b$, contradicting $b > c$. Thus $x_b$ and $x_c$ must be adjacent in $\pi$ with $x_b$ immediately followed by $x_c$. But this would force $b$ and $c$ to be adjacent in $\nu$ with $c$ immediately to the right of $b$, contradicting the assumption. Hence no such occurrence exists, so every permutation avoids $\alpha$.
\end{proof}

We conclude this section with a known result that will be used frequently throughout the remainder of the paper.

\begin{lemma}[{\cite{Claesson}}] \label{lem:vincular}
	For $n \geq 1$, we have
		\[
		|\mathcal{S}_n(\nu)| =
		\begin{cases}
			B_n, & \text{if } \nu \in \{1\overline{23},\  \overline{12}3,\ 1\overline{32}, \ \overline{21}3,\ \overline{23}1, \  3\overline{12}, \ 3\overline{21}, \  \overline{32}1\}, \\[4pt]
			C_n, & \text{if } \nu \in \{\overline{13}2,\ 2\overline{13},\ 2\overline{31},\  \overline{31}2\},
		\end{cases}
		\]
		where $B_n$ denotes the $n$-th Bell number and $C_n$ denotes the $n$-th Catalan number.
\end{lemma}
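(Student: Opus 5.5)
The statement is quoted from Claesson, so the plan is to reprove it by first reducing to three representatives via the trivial symmetries and then treating each representative directly.

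\textbf{Step 1: symmetry reduction.} Reverse, complement and reverse-complement act on vincular patterns in the same way as on classical ones, and they preserve $|\mathcal{S}_n(\cdot)|$. The twelve patterns split into three orbits:
\begin{itemize}
\item $\{1\overline{23},\ \overline{32}1,\ 3\overline{21},\ \overline{12}3\}$;
\item $\{1\overline{32},\ \overline{23}1,\ 3\overline{12},\ \overline{21}3\}$;
\item $\{2\overline{13},\ \overline{31}2,\ 2\overline{31},\ \overline{13}2\}$.
\end{itemize}
So it suffices to handle $1\overline{23}$, $1\overline{32}$ and $2\overline{13}$.

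\textbf{Step 2: the Bell cases.} An occurrence of $1\overline{23}$ is an ascent $\pi_j<\pi_{j+1}$ together with some earlier entry smaller than $\pi_j$. Hence $\pi$ avoids $1\overline{23}$ if and only if every ascent bottom is a left-to-right minimum.

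Cut $\pi$ immediately before each left-to-right minimum. Each resulting segment starts at its minimum and contains no further left-to-right minimum, so inside a segment no ascent can start after the first entry. Therefore each segment is its minimum followed by a decreasing run of larger entries. The segments also appear in order of decreasing minima.

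This gives a bijection with set partitions of $[n]$. Given a partition, order its blocks by decreasing minimum, and write each block as its minimum followed by its remaining elements in decreasing order. The minima are then exactly the left-to-right minima, so the map is well defined and invertible. Thus $|\mathcal{S}_n(1\overline{23})|=B_n$.

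For $1\overline{32}$ the same reasoning applies. Avoidance means every descent bottom $\pi_{j+1}$ has no smaller entry to its left, i.e. every descent bottom is a left-to-right minimum. The segments are then a minimum followed by an increasing run, and the same bijection, with blocks written increasingly, gives $B_n$.

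\textbf{Step 3: the Catalan case.} I will show that $\mathcal{S}_n(2\overline{13})=\mathcal{S}_n(213)$; the count $C_n$ then follows from the classical result that every pattern of length $3$ is counted by Catalan numbers, so $|\mathcal{S}_n(213)|=C_n$. One inclusion is trivial, since any occurrence of $2\overline{13}$ is an occurrence of $213$.

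For the other inclusion, take a classical occurrence $b\cdots a\cdots c$ with $a<b<c$. Scan the entries from $a$ to $c$. The scan starts below $b$ and ends above $b$, so some adjacent pair $x,y$ satisfies $x<b<y$; take $y$ to be the first entry in the scan exceeding $b$ and $x$ its predecessor. Then $b\,x\,y$ is an occurrence of $2\overline{13}$.

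This shrinking argument is the only nonroutine point. The main care needed is to check that $x\neq b$, which holds because $b$ lies to the left of $a$. Everything else is bookkeeping.
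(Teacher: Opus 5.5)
Your proof is correct: the orbit reduction under reverse and complement is right, the left-to-right-minima decomposition gives a genuine bijection with set partitions for both $1\overline{23}$ (decreasing tails) and $1\overline{32}$ (increasing tails), and the shrinking argument for $\mathcal{S}_n(2\overline{13})=\mathcal{S}_n(213)$ is sound, including the check that $x\neq b$. The paper gives no proof of its own and simply cites Claesson, and your argument is essentially the one in that cited source.
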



\section{Patterns $(12; 3\to 3)$ and $(21; 3\to 3)$}\label{sec:12-33}

Archer and Laudone~\cite{Archer} have enumerated $\mathcal{S}_n(\alpha)$ for all arrow patterns $\alpha = (\nu; H)\in \mathcal{A}_k$ with $k \le 3$, $\nu \in \{12, 21, 13\}$, and $|H| = 1$, leaving only the two patterns $(12; 3\to 3)$ and $(21; 3\to 3)$ unresolved. In this section, we complete the enumeration by treating these two remaining cases and deriving explicit formulas for their avoidance counts.

\begin{theorem}\label{thm:12-33}
	For $n\geq 1$, we have 
	\begin{equation*}
		|\mathcal{S}_n(12; 3\to 3)| = d_n + \sum_{m=1}^{n}\sum_{k=0}^{n-m}\binom{n-m}{k}\binom{m+k-1}{n-m} d_k,
	\end{equation*}
	where $d_n$ denotes the $n$-th derangement number.
\end{theorem}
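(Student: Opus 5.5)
The plan is to translate avoidance of $(12;3\to 3)$ into a condition on $\hat{\pi}$ and then count by the \emph{largest fixed point} of $\hat{\pi}$.

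\textbf{Step 1: characterize containment.} A permutation $\pi$ contains $(12;3\to 3)$ exactly when there are values $a<b<c$ such that $c$ is a fixed point of $\hat{\pi}$ and $a$ precedes $b$ in $\pi$. The fixed points of $\hat{\pi}$ are exactly the left-to-right maxima of $\pi$ that are followed immediately by another left-to-right maximum, or by nothing.

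The condition gets stronger as $c$ grows. So if $m$ is the largest fixed point of $\hat{\pi}$, then $\pi$ avoids the pattern iff the values $1,\dots,m-1$ occur in decreasing order in $\pi$. Permutations with no fixed point in $\hat{\pi}$ avoid the pattern trivially. Since $\theta$ is a bijection, they contribute exactly $d_n$, which is the first term of the formula.

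\textbf{Step 2: structural decomposition for fixed $m$.} Fix $m\in[n]$ and describe $\hat{\pi}$ for an avoider whose largest fixed point is $m$.

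First, the cycles whose maxima are less than $m$ appear before $m$ in $\pi$, and their maxima form an increasing sequence of left-to-right maxima. The decreasing requirement on $1,\dots,m-1$ therefore forces at most one such cycle, and that cycle must be written in decreasing order.

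Second, the remaining small values lie in cycles whose maxima exceed $m$. Within $\pi$, these small values must also appear in decreasing order, and all of them must come after any small values preceding $m$. Hence the small values form a single decreasing word $m-1,m-2,\dots,1$, cut into consecutive segments that are inserted into the large structure.

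Third, the $n-m$ large values ($>m$) carry no fixed points of $\hat{\pi}$, but each of them may receive a segment of small values.

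I would let $k$ be the number of large values lying in cycles that contain no small value at all, or a similar parameter. The aim is to explain the factor $\binom{n-m}{k}d_k$ as the choice of a fixed-point-free arrangement on $k$ of the large values. The remaining $n-m-k$ large values would then be placed among the slots created by the decreasing small word, via an insertion or stars-and-bars argument producing $\binom{m+k-1}{n-m}$.

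\textbf{Step 3: obtain the binomial factor.} Concretely, I would build $\pi$ by starting from the $k$ chosen large values arranged as a derangement in cycle form. I would then insert the decreasing chain $m-1,\dots,1$ and the other large values, checking that the admissible choices correspond to choosing $n-m$ positions out of $m+k-1$ available gaps. Each insertion must create no new fixed point larger than $m$ and must keep $m$ a fixed point, so $m$ must be followed in $\pi$ by a left-to-right maximum or by the end of $\pi$.

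\textbf{Main obstacle.} The hard part is pinning down the exact bijective parametrization that yields the product $\binom{n-m}{k}\binom{m+k-1}{n-m}d_k$. In particular, the large values not in the chosen derangement must not create fixed points. I expect this to require that each of them either heads a cycle containing at least one small value, or is absorbed into an existing cycle.

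If a direct bijection resists, the fallback is to derive a recurrence by removing the value $1$ (or $m$), prove the formula satisfies it, and check small $n$ (values $1,2,5,\dots$) against brute force.
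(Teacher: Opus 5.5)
\textbf{There is a genuine gap: the key counting step is left open, and the parameter you propose for it does not work.} Your Steps 1 and 2 are sound and match the paper's setup. You condition on the largest fixed point $m$ of $\hat{\pi}$ and note that $1,\dots,m-1$ must then appear in decreasing order. You also note that everything left of $m$ is smaller than $m$, and that for $m<n$ the entry right after $m$ must exceed $m$.

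The trouble is your tentative choice of $k$ as the number of large values lying in cycles with no small value. Once those $k$ values and their derangement are fixed, the number of ways to complete $\pi$ depends only on $n-m-k$ and $m-1$. So it cannot equal $\binom{m+k-1}{n-m}$ termwise. Concretely, take $n-m=2$ and $m\ge 2$. The only avoider in which $m+1$ and $m+2$ share a cycle free of small values is the one-line word $(m-1)\cdots 2\,1\,m\,(m+2)\,(m+1)$. The $k=2$ term of the formula, however, is $\binom{m+1}{2}$. Your fallback of checking small $n$ is not a proof, and no recurrence is actually specified.

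The missing idea is to let $\sigma$ be the subword of $\pi$ formed by the large values $m+1,\dots,n$. Take $k$ to be the number of non-fixed points of $\hat{\sigma}$, that is, the large values that share their $\hat{\pi}$-cycle with another large value. Small entries to the right of $m$ are never left-to-right maxima, and inserting them does not change which large entries are left-to-right maxima. Hence a large entry is a fixed point of $\hat{\pi}$ exactly when it is a fixed point of $\hat{\sigma}$ and is not immediately followed by a small entry. This gives $\binom{n-m}{k}d_k$ choices of $\sigma$.

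Once $\sigma$ is chosen, the relative order of all large values is fixed, so no large values remain to be inserted. The only freedom is how to split the word $m-1,\dots,1$ among $n-m+1$ gaps: one before $m$, and one after each large entry, never directly after $m$. The $n-m-k$ gaps that follow fixed points of $\hat{\sigma}$ must be nonempty. Stars and bars then gives $\binom{m+k-1}{n-m}$. This is the paper's proof. Your reading of the factor as choosing $n-m$ positions among $m+k-1$ becomes literal if you glue each fixed point of $\hat{\sigma}$ to the small entry that must follow it.
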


\begin{proof}
	Let $\pi \in \mathcal{S}_n(12; 3\to 3)$. We distinguish two cases according to the fixed points of $\hat{\pi}$.
	
	\textbf{Case 1: $\hat{\pi}$ has no fixed points.} 
	Then $\pi$ automatically avoids $(12;3\to 3)$, since the arrow condition $3\to 3$ requires a fixed point. 
	Hence this case contributes $d_n$ permutations.
	
	\textbf{Case 2: $\hat{\pi}$ has at least one fixed point.} 
	Let $m$ be the largest fixed point of $\hat{\pi}$, where $1\le m\le n$. To avoid $(12;3\to 3)$, the elements $1,2,\ldots,m-1$ must appear in $\pi$ in strictly decreasing order.
	Moreover, since $m$ is the largest fixed point, every element to the left of $m$ is smaller than $m$, and every element to the right of $m$ is not a fixed point of $\hat{\pi}$. Furthermore, if $m$ is not the last element of $\pi$, then the element immediately to its right is larger than $m$.
	
	We now construct $\pi$ as follows. First arrange the elements $m+1,m+2,\ldots,n$ arbitrarily to form a permutation $\sigma$ and place it to the right of $m$. 
	Suppose $\hat{\sigma} = \theta^{-1}(\sigma)$ has exactly $k$ non-fixed points. 
	Then such a $\sigma$ can be chosen in $\binom{n-m}{k} d_k$ ways: choose which $k$ elements are non-fixed and arrange them as a derangement, while the remaining elements are fixed points.
	
	Next, insert the elements $1,2,\ldots,m-1$ into the gaps before $m$ and after each element of $\sigma$. 
	Since $1,2,\ldots,m-1$ must appear in strictly decreasing order, the insertion is completely determined by the number of elements placed in each gap. 
	There are $n-m+1$ available gaps: one before $m$, and one after each element of $\sigma$.
	To preserve the maximality of $m$ as a fixed point, every fixed point of $\hat{\sigma}$ must be followed by at least one inserted element.	
	The number of ways to distribute the $m-1$ decreasing elements into these $n-m+1$ gaps is $\binom{m+k-1}{n-m}$.	
	Therefore, for fixed $m$ and $k$, this case contributes
	\[
	\binom{n-m}{k}\binom{m+k-1}{n-m} d_k
	\]
	permutations. 
	Summing over all admissible $m$ and $k$, together with the derangement contribution from Case 1, yields the desired formula.
\end{proof}

The proof of the following theorem is entirely analogous to that of Theorem~\ref{thm:12-33}, with the sole difference that in Case 2, the elements $1, 2, \ldots, m-1$ must appear in strictly increasing order in $\pi$, rather than decreasing order. We therefore omit the details and leave the verification to the reader.

\begin{theorem}\label{thm:21-33}
	For $n\geq 1$, we have 
	\begin{equation*}
		|\mathcal{S}_n(21; 3\to 3)| = d_n + \sum_{m=1}^{n}\sum_{k=0}^{n-m}\binom{n-m}{k}\binom{m+k-1}{n-m} d_k,
	\end{equation*}
	where $d_n$ denotes the $n$-th derangement number.
\end{theorem}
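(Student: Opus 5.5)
We mirror the proof of Theorem~\ref{thm:12-33}, splitting $\mathcal{S}_n(21;3\to 3)$ by the fixed points of $\hat{\pi}$.

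\textbf{Case 1.} If $\hat{\pi}$ has no fixed point, the arrow $3\to 3$ can never be realized. This gives $d_n$ permutations.

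\textbf{Case 2.} Otherwise let $m$ be the largest fixed point of $\hat{\pi}$. An occurrence of $(21;3\to 3)$ is a fixed point $x_3$ together with two smaller values $x_2>x_1$ appearing in the order $x_2,x_1$. Such an occurrence exists if and only if the values $1,\ldots,m-1$ do not appear in increasing order: if two of them are inverted, take $x_3=m$; conversely, any occurrence uses values below some fixed point, hence below $m$. So avoidance is equivalent to $1,2,\ldots,m-1$ appearing in increasing order.

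\textbf{Structure of the word.} As before, $m$ is a fixed point of $\hat{\pi}$ exactly when it is a left-to-right maximum and is followed either by nothing or by a larger element. Consequently every element left of $m$ is smaller than $m$, and all of $m+1,\ldots,n$ lie to the right of $m$, forming a subword $\sigma$. The word $\pi$ therefore consists of $\sigma$ (on $\{m+1,\ldots,n\}$) with $1,\ldots,m-1$ inserted, in increasing order, into $n-m+1$ gaps: the gap before $m$ and the gap after each letter of $\sigma$. The gap immediately after $m$ must stay empty.

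\textbf{The key check.} In the increasing case the insertion interacts with the cycle structure exactly as in the decreasing case. Small elements are never left-to-right maxima, since $m$ precedes them, except possibly the run inserted before $m$, and none of those becomes a fixed point. Indeed, a block $1,2,\ldots,j$ before $m$ begins a cycle with $1$, followed by $2$ as a new left-to-right maximum, and so on. This is the one real difference from the decreasing case, and it is the step I expect to be the main obstacle. I would resolve it by observing that the small elements must be distributed so that no new fixed point is created among them. The clean way is to apply the theorem's counting framework and check that the number of valid distributions depends only on the gap constraints.

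\textbf{Counting.} A left-to-right maximum $y$ of $\sigma$ that is a fixed point of $\hat{\sigma}$ stays a fixed point in $\pi$ unless some small element is inserted right after it. Inserting small elements after non-fixed points changes nothing relevant. The requirement is therefore again that each of the $n-m-k$ fixed points of $\hat{\sigma}$ receive at least one small element. Distributing $m-1$ identical (order-forced) items into $n-m+1$ gaps with $n-m-k$ of them required to be nonempty gives, by stars and bars,
\[
\binom{(m-1)-(n-m-k)+(n-m+1)-1}{n-m}=\binom{m+k-1}{n-m}.
\]
The $\binom{n-m}{k}d_k$ count for $\sigma$ carries over verbatim.

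Summing over $m$ and $k$ and adding Case 1 yields the same formula as in Theorem~\ref{thm:12-33}.
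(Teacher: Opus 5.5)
Your route is the same as the paper's, which just says the proof of Theorem~\ref{thm:12-33} goes through with ``decreasing'' replaced by ``increasing''. Your final stars-and-bars count is also correct. However, your ``key check'' paragraph is wrong, and it contradicts your own counting paragraph.

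In the increasing case, the small values placed before $m$ form the prefix $1,2,\ldots,j$ of $\pi$. Each of them is a left-to-right maximum followed by a larger element, so $\hat{\pi}$ contains the fixed points $(1)(2)\cdots(j)$. Your claim that ``none of those becomes a fixed point'' is therefore false. Worse, your proposed resolution, distributing the small elements so that no new fixed point is created among them, would force the gap before $m$ to be empty and would undercount. For $n=3$ it discards $\pi=123$ (where $m=3$), giving $4$ instead of $|\mathcal{S}_3(21;3\to 3)|=5$. For $n=4$ it gives $14$ instead of $17$, losing $1234$, $1342$ and $1243$.

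The correct observation is that these fixed points are harmless. This is not even a new phenomenon: in the decreasing case of Theorem~\ref{thm:12-33}, a single element placed before $m$ is also a fixed point. Whatever cycles the run before $m$ forms consist of values smaller than $m$, so $m$ remains the largest fixed point. By your own Case~2 criterion, avoidance depends only on the relative order of $1,\ldots,m-1$, not on which of them happen to be fixed. Hence the gap before $m$ is unconstrained. The only constraints are that the gap right after $m$ stays empty and that each of the $n-m-k$ fixed points of $\hat{\sigma}$ receives at least one inserted element. These give exactly the count $\binom{m+k-1}{n-m}$ you wrote. Replace the key-check paragraph with this argument and the proof is complete.
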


By Theorem~\ref{thm:12-33} and Theorem~\ref{thm:21-33}, we have 
$(12; 3\to 3) \sim (21; 3\to 3)$.

\section{Permutations that avoid $(31; b\to c)$}\label{sec:31-H}


We now enumerate the arrow patterns of the form $(31; b\to c)\in \mathcal{A}_3$. The enumeration results are summarized in Table \ref{tab:31-H}.

\begin{table}[htbp]
	\centering
	\begin{tabular}{ccccc}
		\toprule
		$\nu$ & $H$ & $a_n = |\mathcal{S}_n(\nu;H)|$ & OEIS & Theorem \\
		\midrule
		$31$ & $1\to 2$ & $B_n$ & A000110 & Theorem \ref{thm:31-12} \\
		$31$ & $2\to 1$ & $B_n$ & A000110 & Theorem \ref{thm:31-21} \\
		$31$ & $3\to 2$ & $B_n$ & A000110 & Theorem \ref{thm:31-32} \\
		$31$ & $2\to 3$ & $B^*_n$ & A006789 & Theorem \ref{thm:31-23} \\
		$31$ & $2\to 2$ & $d_n +  \sum_{m=1}^{n} d_{n-m} a_{m-1}$ & A259870 & Theorem \ref{thm:31-22} \\
		\bottomrule
	\end{tabular}
	\caption{Enumeration of permutations avoiding the arrow pattern $(31;H)\in \mathcal{A}_3$ for various $H$.}
	\label{tab:31-H}
\end{table}

\begin{theorem}\label{thm:31-12}
	For $n\geq 1$, we have 
		$|\mathcal{S}_n(31; 1\to 2)| = B_n$.
\end{theorem}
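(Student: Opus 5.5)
The plan is to reduce $(31;1\to 2)$ to a vincular pattern of length $3$ and then apply Lemma~\ref{lem:vincular}. Here $\nu = 31$ on $[3]$, the missing value is $r = 2$, and the arrow is $\nu_2 \to r$ with $\nu_2 = 1 < r$. So Lemma~\ref{lem:Archer}(ii) does not apply. Lemma~\ref{lem:arrow-vincular}(ii) does: its hypothesis $r < \max(\nu_1) = 3$ holds. It gives
\[
\mathcal{S}_n(31;1\to 2) = \mathcal{S}_n(3\overline{12}),
\]
and Lemma~\ref{lem:vincular} gives $|\mathcal{S}_n(3\overline{12})| = B_n$, which is the claim.

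The proof of Lemma~\ref{lem:arrow-vincular}(ii) was only described as analogous to (i), so I would verify this instance directly. Both directions reduce to the same fact: in the one-line notation of $\pi$, a cycle of $\hat{\pi}$ is a contiguous block that begins with its largest element, and that element is a left-to-right maximum of $\pi$.

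Containment of the arrow pattern implies containment of the vincular pattern. Take an occurrence $x_3 x_1$ with $x_1<x_2<x_3$, $x_3$ before $x_1$, and $\hat{\pi}(x_1) = x_2$. Then either $x_2$ immediately follows $x_1$, or $x_1$ ends its cycle and $x_2$ is the first element of that cycle, lying to the left of $x_1$. I would rule out the second case as follows.
\begin{itemize}
\item If $x_3$ precedes $x_2$, then $x_2$ is not a left-to-right maximum, since $x_3 > x_2$. But a cycle's first element must be one.
\item If $x_3$ lies strictly between $x_2$ and $x_1$, then $x_3$ lies inside the cycle block headed by $x_2$. Every element of that block is smaller than $x_2$, contradicting $x_3 > x_2$.
\end{itemize}
Hence $x_1x_2$ are adjacent, and $x_3x_1x_2$ is an occurrence of $3\overline{12}$.

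Containment of the vincular pattern implies containment of the arrow pattern. Suppose $x_3 \cdots x_1 x_2$ occurs with $x_1 x_2$ adjacent. Then $\hat{\pi}(x_1) = x_2$ unless $x_2$ begins a new cycle. That would make $x_2$ a left-to-right maximum, which is impossible because $x_3 > x_2$ appears earlier. So $(x_3x_1;\,x_1\to x_2)$ is an occurrence of $(31;1\to 2)$.

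This direct check is the only step needing care, and it is short.
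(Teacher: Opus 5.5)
Your proposal is correct and matches the paper's proof. Like the paper, you apply Lemma~\ref{lem:arrow-vincular}(ii) to get $\mathcal{S}_n(31;1\to 2)=\mathcal{S}_n(3\overline{12})$ and then use Lemma~\ref{lem:vincular}. Your direct check of this instance is sound and usefully fills in the case of Lemma~\ref{lem:arrow-vincular}(ii) that the paper dismisses as analogous.
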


\begin{proof}
	By Lemma~\ref{lem:arrow-vincular}(ii) and Lemma~\ref{lem:vincular}, we have
	$|\mathcal{S}_n(31; 1\to 2)| = |\mathcal{S}_n(3\overline{12})| = B_n$. 
\end{proof}

\begin{theorem}\label{thm:31-21}
	For $n\geq 1$, we have 
	$|\mathcal{S}_n(31; 2\to 1)| = B_n$.
\end{theorem}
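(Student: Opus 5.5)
The plan is to reduce the arrow pattern to a vincular pattern and then quote the known enumeration, exactly as in the proof of Theorem~\ref{thm:31-12}. In the notation of Lemma~\ref{lem:Archer}, the pattern $(31;2\to 1)$ has $\nu=\nu_1\nu_2=31$ with $k=3$. The missing letter is $r=2$, and the arrow has the form $r\to\nu_i$ with $i=2$ and $\nu_2=1$.

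First I check the hypotheses. We have $r=2>1=\nu_2$, so Lemma~\ref{lem:Archer}(i) applies. Equivalently, $\nu_2=1\notin\mathrm{Lmax}(31)$, so Lemma~\ref{lem:arrow-vincular}(i) applies as well. Either lemma gives
\[
\mathcal{S}_n(31;2\to 1)=\mathcal{S}_n(3\overline{21}).
\]
This is obtained by inserting $r=2$ immediately before $\nu_2=1$ and joining the two letters with an overline.

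Second, the vincular pattern $3\overline{21}$ is in the Bell list of Lemma~\ref{lem:vincular}, so $|\mathcal{S}_n(3\overline{21})|=B_n$ for all $n\ge 1$. This is the claimed count.

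There is no real obstacle here. The only point that needs care is reading off the vincular pattern correctly: $r$ is placed immediately to the left of $\nu_i$, and the reduced word must be $3\overline{21}$ and not, say, $\overline{32}1$. Both of those patterns happen to be Bell-counted in any case. As a sanity check, I would confirm the intuition behind Lemma~\ref{lem:arrow-vincular}(i) directly. Since $1$ is never the first element of a cycle, $\hat{\pi}(x_2)=x_1$ forces $x_1$ to immediately follow $x_2$ in $\pi$. The larger element $x_3$ must precede $x_1$, and hence it precedes $x_2$.
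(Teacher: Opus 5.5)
Your proposal is correct and matches the paper's proof: Lemma~\ref{lem:Archer}(i) applies because $r=2>\nu_2=1$, giving $\mathcal{S}_n(31;2\to 1)=\mathcal{S}_n(3\overline{21})$, and Lemma~\ref{lem:vincular} then gives $B_n$. In your sanity check, the precise reason $x_1$ is not the first element of its cycle is that $x_2>x_1$ lies in the same cycle (or that $x_3>x_1$ precedes it), not that the value $1$ can never start a cycle.
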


\begin{proof}
	By Lemma~\ref{lem:Archer}(i) and Lemma~\ref{lem:vincular}, we have
	$|\mathcal{S}_n(31; 2\to 1)| = |\mathcal{S}_n(3\overline{21})| = B_n$. 
\end{proof}

\begin{theorem}\label{thm:31-32}
	For $n\geq 1$, we have 
	$|\mathcal{S}_n(31; 3\to 2)| = B_n$.
\end{theorem}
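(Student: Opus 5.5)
The plan is to reduce the arrow pattern to a vincular pattern and then read off the count from Lemma~\ref{lem:vincular}, exactly as in Theorems~\ref{thm:31-12} and~\ref{thm:31-21}.

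Write $\nu = 31$, so $k=3$ and the missing letter is $r=2$. The arrow $3\to 2$ has the form $\nu_i \to r$ with $i=1$, $\nu_1 = 3$ and $r = 2 < \nu_1$. This is precisely the hypothesis of Lemma~\ref{lem:Archer}(ii). Since $i=1$, there are no letters before $\nu_1$, and the lemma gives
\[
\mathcal{S}_n(31;\ 3\to 2) = \mathcal{S}_n(\overline{32}1).
\]
The vincular pattern $\overline{32}1$ appears in the list of Lemma~\ref{lem:vincular} among the patterns counted by Bell numbers. Hence $|\mathcal{S}_n(31; 3\to 2)| = B_n$.

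There is essentially no obstacle. The only step needing care is the bookkeeping in applying Lemma~\ref{lem:Archer}(ii): the overlined pair is $\nu_i r = 32$, and it sits in the position formerly occupied by $\nu_1 = 3$, which yields $\overline{32}1$.

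As a sanity check on the direction of adjacency, I would also verify this directly. Since $x_3 > x_2$, the relation $\hat{\pi}(x_3) = x_2$ cannot arise by wrapping around a cycle, because the first element of a cycle is its maximum. So $x_2$ must immediately follow $x_3$ in $\pi$, and $x_1$ must lie to the right of the pair. Conversely, if $x_2$ immediately follows $x_3$, then $x_2$ is not a left-to-right maximum, so $x_2$ does not start a new cycle, and therefore $\hat{\pi}(x_3) = x_2$.
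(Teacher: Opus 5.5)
Your proposal is correct and is essentially the paper's own proof: the paper also applies Lemma~\ref{lem:Archer}(ii) with $\nu_1=3>r=2$ to obtain $\mathcal{S}_n(31;3\to 2)=\mathcal{S}_n(\overline{32}1)$, and then reads off $B_n$ from Lemma~\ref{lem:vincular}. Your direct adjacency check is accurate as well, and it simply re-derives that lemma in this special case.
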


\begin{proof}
	By Lemma~\ref{lem:Archer}(ii) and Lemma~\ref{lem:vincular}, we have
	$|\mathcal{S}_n(31; 3\to 2)| = |\mathcal{S}_n(\overline{32}1)| = B_n$. 
\end{proof}

\begin{theorem}\label{thm:31-23}
	For $n\geq 1$, we have 
	$|\mathcal{S}_n(31; 2\to 3)| = B^*_n$, where $B^*_n$ denotes the $n$-th 
	Bessel number.
\end{theorem}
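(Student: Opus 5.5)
The plan is to translate avoidance of $(31;2\to 3)$ into a condition on the standard cycle form $\hat{\pi}$, recognise the resulting cycle structures as a class of set partitions, and then match that class with the non-overlapping partitions counted by $B^*_n$.

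\textbf{Step 1 (unwinding the pattern).} A permutation $\pi$ contains $(31;2\to 3)$ exactly when two conditions hold. First, there is an element $a$ with $a<\hat{\pi}(a)=c$. Second, some element $x<a$ appears to the right of $c$ in $\pi$. As in the proof of Lemma~\ref{lem:arrow-vincular}, the relation $\hat{\pi}(a)=c$ with $a<c$ arises in one of two ways:
\begin{itemize}
\item $c$ immediately follows $a$ inside a cycle;
\item $c$ is the maximum (first element) of its cycle and $a$ is the last element of that cycle.
\end{itemize}
So $\pi$ avoids the pattern if and only if, for every such ascent $a\to c$ of $\hat{\pi}$, every entry of $\pi$ to the right of $c$ exceeds $a$.

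\textbf{Step 2 (structure of avoiders).} Take a cycle $(c\,y_1\cdots y_s)$ with $s\ge 1$.
\begin{itemize}
\item The wrap-around arrow $y_s\to c$ forces every later entry to exceed $y_s$. This includes $y_1,\dots,y_{s-1}$, so $y_s$ is the minimum of its cycle.
\item Suppose some internal ascent $y_i<y_{i+1}$ with $i<s$ occurred. The arrow $y_i\to y_{i+1}$ would force everything after $y_{i+1}$ to exceed $y_i$. But $y_s$, the cycle minimum, lies after $y_{i+1}$ and is smaller than $y_i$, a contradiction. Hence every cycle is decreasing.
\item With decreasing cycles, the only remaining constraint comes from the wrap-around arrow: the minimum of each non-singleton cycle is smaller than every element of every later cycle.
\item Fixed points produce no ascent, so they impose nothing.
\end{itemize}
Conversely, any $\pi$ whose cycles satisfy these conditions avoids the pattern.

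Consequently $|\mathcal{S}_n(31;2\to 3)|$ equals the number of set partitions of $[n]$ satisfying the following condition. List the blocks in order of increasing maxima. Then there are no two blocks $B,B'$ with $|B|\ge 2$, $\max B<\max B'$ and $\min B>\min B'$. In words, no non-singleton block has its span $[\min B,\max B]$ nested inside the span of another block. I have checked this against small cases: the count is $5$ for $n=3$ and $14$ for $n=4$, where the unique forbidden partition is $\{1,4\},\{2,3\}$. Both agree with $B^*_n$.

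\textbf{Step 3 (matching with Bessel numbers), the main obstacle.} Recall that $B^*_n$ counts non-overlapping partitions, that is, partitions with no two blocks satisfying $\min B<\min B'<\max B<\max B'$. In both that class and ours, singletons are unrestricted, and only the spans of non-singleton blocks matter.

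I would encode a partition in the standard way. Each element is labelled an opener, a closer, a transient (a middle element of a block), or a singleton. The partition is then recovered from this labelling together with a matching of openers to closers, subject to transients being attached to currently open blocks.

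The intended bijection keeps the labelling fixed and changes only the matching:
\begin{itemize}
\item the non-nesting rule is to close the block opened earliest (first-in, first-out);
\item the non-crossing rule is to close the block opened most recently (last-in, first-out).
\end{itemize}
The delicate point is that transients must be reassigned consistently. The cleanest route is to note the following. In the non-nesting class, each transient can only join the currently open block with the smallest minimum, and in the non-overlapping class it can only join the one with the largest minimum. In both classes, therefore, the entire partition is determined by the label word. Both counts then reduce to the same count of admissible label words, and this yields $B^*_n$.

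If this rematching argument proves awkward, the fallback is to derive a recurrence directly from the decreasing-cycle description. I would condition on the cycle containing $1$, or on the last non-singleton block, and compare the result with the known recurrence or generating function for the Bessel numbers.
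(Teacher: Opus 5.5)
Your Steps~1 and~2 are correct and reach the paper's characterization. Every cycle of $\hat{\pi}$ is decreasing, and the minimum of each non-singleton cycle lies below every element of every later cycle. Later singletons satisfy this automatically, so it is the paper's condition that the last elements of the non-singleton cycles increase; in other words, the cycles form a \emph{monotone} (non-nesting) partition. In the internal-ascent bullet you should treat $i+1=s$ separately: there $y_{s-1}<y_s$ contradicts the minimality of $y_s$ directly. The paper stops at this point and cites Claesson for the fact that monotone partitions are counted by $B^*_n$. Your Step~3 tries to reprove that fact, and this is where the argument breaks.

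The false claim is that a transient can join only the open block with smallest minimum (non-nesting class), or only the one with largest minimum (non-overlapping class), so that a partition is determined by its label word. Both nesting and overlapping are conditions on $\min B$ and $\max B$ alone, so in either class a transient may join \emph{any} currently open block. Two examples with the same label word (opener, opener, transient, closer, closer):
\begin{itemize}
\item $\{1,3,4\},\{2,5\}$ and $\{1,4\},\{2,3,5\}$ are both monotone;
\item $\{1,3,5\},\{2,4\}$ and $\{1,5\},\{2,3,4\}$ are both non-overlapping.
\end{itemize}
Counting label words alone gives $42$ for $n=5$ rather than $B^*_5=43$. Your checks at $n=3,4$ cannot detect this, since two open blocks at the moment of a transient first occur at $n=5$.

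The repair is easy.
\begin{itemize}
\item In both classes the matching of openers to closers is forced (FIFO, respectively LIFO).
\item Every label word satisfying the obvious prefix conditions is admissible in both classes.
\item A transient at position $t$ has exactly $h_t$ choices, where $h_t$ is the number of blocks open at $t$. This depends only on the label word.
\end{itemize}
Hence both classes have $\sum_w\prod_t h_t$ elements. An explicit bijection keeps the label word and, for each transient, the rank (in order of opening) of the open block it joins, and switches FIFO matching to LIFO matching. With this correction, or simply by citing Claesson as the paper does, your proof is complete.
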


\begin{proof}
	Let $\pi \in \mathcal{S}_n(31; 2\to 3)$. We first establish a necessary condition on the cycle structure of $\hat{\pi}$.
	
	We claim that every cycle $(c_1, c_2, \ldots, c_k)$ of $\hat{\pi}$ must satisfy $c_1 > c_2 > \cdots > c_k$.
	Suppose otherwise that there exists some $i$ with $c_i < c_{i+1}$. 
	We distinguish two cases.	
	If $i+1 = k$, then $c_{i+1}$ is the last element of the cycle. 
	Since $c_1$ is the maximum of the cycle, we have $c_1 > c_{i+1}$. 
	Then $(c_1 c_i;  c_{i+1}\to c_1)$ would form an occurrence of $(31; 2\to 3)$ in $\pi$, a contradiction.	
	If $i+1 < k$, then either $c_k > c_i$ or $c_k < c_i$. If $c_k > c_i$, then $(c_1c_i; c_k \to c_1)$ is an occurrence of $(31; 2\to 3)$. 
	If $c_k < c_i$, then $(c_{i+1}c_k; c_i \to c_{i+1})$ is an occurrence of $(31; 2\to 3)$. Both cases yield a contradiction. Hence every cycle of $\hat{\pi}$ is strictly decreasing.
	
	Next, we claim that for any two non-singleton cycles of $\hat{\pi}$
	\[
	C = (c_1, c_2, \ldots, c_k) \quad \text{and} \quad D = (d_1, d_2, \ldots, d_\ell),
	\]
	with $c_1 < d_1$, we must have $c_k < d_\ell$. 
	Suppose otherwise that $c_k > d_\ell$. Then 
	$(c_1 d_{\ell}; c_k \to c_1)$
	 forms an occurrence of $(31; 2\to 3)$, a contradiction. 
	 Thus the last elements of the non-singleton cycles of $\hat{\pi}$ are ordered in increasing order.	
	Conversely, it is straightforward to verify that if $\hat{\pi}$ satisfies the two conditions above, then $\pi$ avoids $(31; 2\to 3)$.
	
Such permutations are in bijection with monotone partitions of $[n]$, where the bijection sends each cycle of $\hat{\pi}$ to a block of the partition. Recall that a partition is \emph{monotone} if its non-singleton blocks can be written in increasing order of their least element and increasing order of their greatest element, simultaneously. It is known that monotone partitions are counted by the Bessel numbers $B_n^*$~\cite{Claesson}. This completes the proof.
\end{proof}

\begin{theorem}\label{thm:31-22}
	Let $a_n = |\mathcal{S}_n(31; 2\to 2)|$. 
	Then for $n\geq 1$, we have 
	\begin{equation}\label{equ:31-22}
		a_n = d_n +  \sum_{m=1}^{n} d_{n-m} a_{m-1},
	\end{equation}
	with $a_0 = 1$, where $d_n$ denotes the $n$-th derangement number.
\end{theorem}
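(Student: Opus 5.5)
The plan is to first turn avoidance of $(31;2\to 2)$ into a purely positional condition on $\pi$, and then decompose $\pi$ at the largest fixed point of $\hat{\pi}$.

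\textbf{Step 1: fixed points of $\hat{\pi}$.} A value $f$ is a fixed point of $\hat{\pi}$ exactly when $f$ is a left-to-right maximum of $\pi$ and the entry immediately to its right (if there is one) is also a left-to-right maximum, i.e.\ $f$ forms a singleton cycle under $\theta^{-1}$.

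\textbf{Step 2: the key characterization.} I claim that $\pi$ avoids $(31;2\to 2)$ if and only if, for every fixed point $f$ of $\hat{\pi}$, we have $\{\pi_1,\ldots,\pi_{f-1}\}=[f-1]$ and $\pi_f=f$.

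\emph{Containment.} By definition, $\pi$ contains the pattern iff there is a fixed point $f$ and values $a>f>b$ with $a$ to the left of $b$ in $\pi$.

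\emph{Avoidance forces the prefix.} Suppose $f$ is a fixed point and $\pi$ avoids the pattern. Since $f$ is a left-to-right maximum, every element larger than $f$ lies to its right. If some $b<f$ also lay to the right of $f$, it cannot be the entry right after $f$, because that entry is a left-to-right maximum and hence exceeds $f$. So that larger entry $a$ would precede $b$, giving an occurrence. Hence all values smaller than $f$ precede $f$, which gives the prefix condition.

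\emph{The prefix forbids occurrences.} Conversely, if the prefix condition holds, every value larger than $f$ sits to the right of every value smaller than $f$. So no pair $a>f>b$ with $a$ before $b$ exists, and $\pi$ avoids the pattern.

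\textbf{Step 3: the recurrence.} If $\hat{\pi}$ has no fixed points, $\pi$ avoids the pattern trivially; via $\theta$ these contribute $d_n$.

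Otherwise let $m$ be the largest fixed point. By Step 2, $\pi=\tau\, m\, \sigma$, where $\tau$ is a permutation of $[m-1]$ and $\sigma$ is a word on $\{m+1,\ldots,n\}$. Since $m$ and every left-to-right maximum of $\sigma$ exceed all entries of $\tau$, the cycles of $\hat{\pi}$ split as those of $\hat{\tau}$, the singleton $(m)$, and those of $\widehat{\operatorname{red}(\sigma)}$ shifted by $m$. Note that $m$ is indeed fixed, because the first entry of $\sigma$ is a left-to-right maximum.

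\begin{itemize}
\item Maximality of $m$ is equivalent to $\widehat{\operatorname{red}(\sigma)}$ being a derangement, giving $d_{n-m}$ choices of $\sigma$.
\item The fixed points larger than $m$ are none, and the fixed point $m$ itself satisfies the condition of Step 2 automatically.
\item For a fixed point $f<m$, the condition of Step 2 in $\pi$ is the same as in $\tau$, because all other entries are larger than $f$ and lie to the right of $\tau$. Hence $\tau$ ranges exactly over $\mathcal{S}_{m-1}(31;2\to 2)$, giving $a_{m-1}$ choices, with $a_0=1$ for the empty word.
\end{itemize}

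The map $\pi\mapsto(m,\tau,\sigma)$ is reversible. Summing over $m$ gives $a_n=d_n+\sum_{m=1}^n d_{n-m}a_{m-1}$, which is \eqref{equ:31-22}.

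The main obstacle is Step 2, and within it the observation that the entry immediately after $f$ must exceed $f$; this is what rules out a smaller element sneaking in right after $f$. The remainder is bookkeeping: checking that the cycle decomposition splits cleanly at $m$ and that the fixed-point conditions do not interact across the split.
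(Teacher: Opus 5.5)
Your proof is correct, but it takes a different route from the paper. The paper never enumerates directly. It proves the set equality $\mathcal{S}_n(31;2\to 2)=\mathcal{S}_n(21;2\to 2)$, using two facts: a fixed point $f$ of $\hat{\pi}$ is a left-to-right maximum, so any larger entry lies after it; and the entry immediately after $f$ is a left-to-right maximum exceeding $f$, so it precedes any smaller entry to the right of $f$. It then imports the recurrence from Archer and Laudone's Theorem~4.2 for $(21;2\to 2)$.

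You instead prove the structural characterization that every fixed point $f$ sits at position $f$ with prefix $[f-1]$. You then decompose at the largest fixed point and check that the cycles of $\hat{\pi}$ split into those of $\hat{\tau}$, the singleton $(m)$, and a derangement part, which produces the $d_{n-m}a_{m-1}$ term from scratch.

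The key observation is the same in both arguments: the successor of a fixed point is a larger left-to-right maximum. Your Step~2 condition is also exactly the characterization of $(21;2\to 2)$-avoiders, so your argument implicitly contains the paper's set equality. The paper's route is shorter and records the stronger fact that the two avoidance classes coincide. Yours is self-contained and shows where the recurrence comes from, rather than relying on the external citation.
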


\begin{proof}
	Archer and Laudone proved in \cite[Theorem 4.2]{Archer} that the sequence $|\mathcal{S}_n(21; 2\to 2)|$ satisfies the same recurrence as in Eq.~\eqref{equ:31-22}, with the same initial value $a_0 = 1$.
	It therefore suffices to show that $\mathcal{S}_n(31; 2\to 2) = \mathcal{S}_n(21; 2\to 2)$.
	
	If a permutation contains $(31; 2\to 2)$, then the fixed point $2$ must appear before the larger element $3$, since every fixed point is a left-to-right maximum. Hence the permutation also contains $(21; 2\to 2)$.
	Conversely, if a permutation contains $(21; 2\to 2)$, then the fixed point $2$ must be immediately followed by a larger element, which serves as the element $3$ in the pattern $(31; 2\to 2)$. Thus the permutation also contains $(31; 2\to 2)$.	
	Therefore the two avoidance sets are equal, and the desired formula follows.
\end{proof}

\section{Permutations that avoid $(23; b\to c)$}\label{sec:23-H}

We now enumerate the arrow patterns of the form $(23; b\to c)\in \mathcal{A}_3$. The enumeration results are summarized in Table \ref{tab:23-H}.

\begin{table}[htbp]
	\centering
	\begin{tabular}{ccccc}
		\toprule
		$\nu$ & $H$ & $a_n = |\mathcal{S}_n(\nu;H)|$ & OEIS & Theorem \\
		\midrule
		$23$ & $2\to 1$ & $B_n$ & A000110 & Theorem \ref{thm:23-21} \\
		$23$ & $1\to 2$ & $B_n$ & A000110 & Theorem \ref{thm:23-12} \\
		$23$ & $3\to 1$ & $C_n$ & A000108 & Theorem \ref{thm:23-31} \\
		$23$ & $1\to 3$ & $C_n$ & A000108 & Theorem \ref{thm:23-13} \\
		$23$ & $1\to 1$ &  \text{Eq.~\eqref{equ:23-11}}  & New & Theorem \ref{thm:23-11} \\
		\bottomrule
	\end{tabular}
	\caption{Enumeration of permutations avoiding the arrow pattern $(23;H)\in \mathcal{A}_3$ for various $H$.}
	\label{tab:23-H}
\end{table}

\begin{theorem}\label{thm:23-21}
	For $n\geq 1$, we have 
	$|\mathcal{S}_n(23; 2\to 1)| = B_n$.
\end{theorem}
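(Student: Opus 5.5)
Here $\nu = 23$ and $k=3$, so the missing letter is $r = 1$. The arrow $2\to 1$ has the form $\nu_i \to r$ with $\nu_1 = 2 > r$. The plan is to apply Lemma~\ref{lem:Archer}(ii) with $i=1$. That lemma gives
\[
\mathcal{S}_n(23;\ 2\to 1) = \mathcal{S}_n(\overline{21}3),
\]
and Lemma~\ref{lem:vincular} then gives $|\mathcal{S}_n(\overline{21}3)| = B_n$, which is the statement.

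It is worth checking the translation directly, in case the cited lemma's hypotheses are misread. Suppose $\pi$ contains $(23;2\to1)$, witnessed by $x_1<x_2<x_3$ with $x_2$ before $x_3$ and $\hat\pi(x_2)=x_1$. If $x_2$ were the last element of its cycle, then $x_1$ would be the first element of that cycle, hence its maximum, which is impossible since $x_1<x_2$. So $x_1$ immediately follows $x_2$ in $\pi$. Because $x_3$ comes after $x_2$ and is not $x_1$, it also lies to the right of $x_1$. Thus $x_2x_1x_3$ is an occurrence of $\overline{21}3$.

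Conversely, suppose $x_2x_1x_3$ is an occurrence of $\overline{21}3$, with $x_2,x_1$ adjacent. Since $x_1<x_2$, the entry $x_1$ is not a left-to-right maximum, so it does not open a new cycle of $\hat\pi$. Hence $\hat\pi(x_2)=x_1$, and $\pi$ contains $(23;2\to1)$.

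I expect no real obstacle. The only step needing care is confirming that the cycle-boundary case is impossible, and that follows from the convention that each cycle is written with its largest element first.
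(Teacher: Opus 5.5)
Your proposal is correct and matches the paper's proof exactly: the paper also applies Lemma~\ref{lem:Archer}(ii) with $i=1$ to get $\mathcal{S}_n(23;2\to 1)=\mathcal{S}_n(\overline{21}3)$ and then reads off $B_n$ from Lemma~\ref{lem:vincular}. Your direct check of the translation is a correct, self-contained verification of the same step.
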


\begin{proof}
	By Lemma~\ref{lem:Archer}(ii) and Lemma~\ref{lem:vincular}, we have
	$|\mathcal{S}_n(23; 2\to 1)| = |\mathcal{S}_n(\overline{21}3)| = B_n$. 
\end{proof}

\begin{theorem}\label{thm:23-12}
	For $n\geq 1$, we have 
	$|\mathcal{S}_n(23; 1\to 2)| = B_n$.
\end{theorem}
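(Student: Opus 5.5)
Neither form of Lemma~\ref{lem:arrow-vincular} applies here. The arrow is $r\to\nu_1$ with $r=1<2=\nu_1$, and $\nu_1=2$ is a left-to-right maximum of $\nu=23$. So I would not look for a set equality with a vincular pattern. A set equality with $\overline{12}3$ in fact fails already for $n=3$: $\pi=123$ has $\hat\pi=(1)(2)(3)$ and avoids $(23;1\to2)$, but it contains $\overline{12}3$. I would instead characterize the avoiding permutations through their cycle structure and then count them with the Bell recurrence.

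\textbf{Step 1 (characterization).} The permutation $\pi$ contains $(23;1\to2)$ exactly when $\hat\pi$ has an excedance $x<\hat\pi(x)=y$ and some element larger than $y$ lies to the right of $y$ in $\pi$. Hence
\[
\pi\in\mathcal S_n(23;1\to2)\iff \text{every excedance value } y=\hat\pi(x)>x \text{ is a right-to-left maximum of } \pi .
\]
Two configurations produce an excedance value $y$:
\begin{itemize}
\item $y$ follows $x$ directly inside a cycle;
\item $y$ is the head of a non-singleton cycle whose last element is $x$.
\end{itemize}
In the second case the condition says the cycle head $y$ is a right-to-left maximum. Since $y$ is also a left-to-right maximum, $y$ must be $n$. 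Hence every non-singleton cycle except possibly the last one (the one headed by $n$) is forbidden. So all cycles not containing $n$ are fixed points, and those fixed points are automatically fine. The constraint therefore falls entirely on the cycle of $n$ and on where the fixed points sit relative to it.

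\textbf{Step 2 (structure and counting).} Write $\pi=f_1f_2\cdots f_j\,n\,w$. Here the $f$'s are the fixed points, which form an increasing prefix, and $n\,w$ is the last cycle. The requirement on $w$ is: every ascent top $w_{t+1}>w_t$ inside $n\,w$ must be a right-to-left maximum of $w$. The fixed points precede $n$ and impose nothing further. So
\[
|\mathcal S_n(23;1\to2)|=\sum_{S}g(|S|),
\]
where $S$ runs over the possible sets of elements of the cycle of $n$ other than $n$, and $g(m)$ counts words $w$ on an $m$-set in which every ascent top is a right-to-left maximum. I would then aim to prove $g(m)=B_m$. Granting that,
\[
|\mathcal S_n(23;1\to2)|=\sum_{m}\binom{n-1}{m}B_m=B_n
\]
by the standard Bell recurrence, which is the claim.

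\textbf{Step 3 (the main obstacle: $g(m)=B_m$).} I expect this to be the hard part. The first thing I would try is to split $w$ at its right-to-left maxima $M_1>M_2>\cdots>M_s$. Between consecutive right-to-left maxima the letters must form a decreasing run, since any ascent there would have a top that is not a right-to-left maximum. Moreover, each right-to-left maximum is entered either from a larger letter or as an ascent top, which is allowed. So $w$ amounts to a choice of right-to-left maxima together with a decreasing block preceding each one, and each block must consist of letters smaller than the next right-to-left maximum. This is exactly a set partition in which each block is labelled by its maximum and the blocks are ordered by decreasing maxima. That ordering is forced, which gives $B_m$.

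If this decomposition has gaps, my fallback is to compare small values with $B_n$ and instead derive the recurrence $a_{n+1}=\sum_k\binom nk a_k$ directly, by conditioning on the position of the element $1$.
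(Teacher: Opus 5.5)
Your proof is correct, and it takes a genuinely different route from the paper.

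\textbf{The paper's route.} The paper never describes the avoiders. It proves the set equality $\mathcal{S}_n(23;1\to 2)=\mathcal{S}_n(13;1\to 2)$ by transferring occurrences. If $\hat\pi(x_1)=x_2$, then $x_2$ either immediately follows $x_1$ or heads their cycle to the left of $x_1$, so requiring $x_1$ before $x_3$ is equivalent to requiring $x_2$ before $x_3$. It then quotes Archer and Laudone's result $|\mathcal{S}_n(13;1\to 2)|=B_n$.

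\textbf{Your route.} You characterize the class directly: every excedance value of $\hat\pi$ must be a right-to-left maximum of $\pi$. This forces every cycle other than that of $n$ to be a fixed point. It then reduces the count to words $w$ whose ascent tops are right-to-left maxima.

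\textbf{Step 3 has no gap.} The runs between consecutive right-to-left maxima are forced to be decreasing. Each letter of a run is automatically smaller than the maximum that closes it. The converse needs only one line. Given a partition, write each block as its non-maximal elements in decreasing order followed by its maximum, and list the blocks by decreasing maxima. Then the block maxima are exactly the right-to-left maxima, and every ascent ends at one of them. So $g(m)=B_m$ holds, and the fallback is unnecessary. Present this as a finished bijection rather than as something you ``aim to prove''.

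\textbf{What each approach buys.} Your argument is self-contained and gives a transparent description of the class. It also gives an explicit bijection with set partitions of $[n]$: the block of $n$ is $\{n\}$ together with the fixed points, and the other blocks are the closed runs of $w$. Since the two avoidance sets coincide, it also reproves the count for $(13;1\to 2)$. The paper's argument is shorter but outsources all of the enumeration to the earlier result.
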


\begin{proof}
We claim that $\mathcal{S}_n(23; 1\to 2) = \mathcal{S}_n(13; 1\to 2)$. 	
If a permutation $\pi$ contains $(13;1\to 2)$, then there exist $x_1<x_2<x_3$ such that $x_1$ appears before $x_3$ in $\pi$ and $\hat{\pi}(x_1)=x_2$. 
This implies that either $x_2$ immediately follows $x_1$, or $x_2$ appears before $x_1$ as the first element of that cycle. 
In either case, $x_2$ must appear before $x_3$ in $\pi$.
 Hence $(x_2x_3; x_1\to x_2)$ forms an occurrence of $(23;1\to 2)$ in $\pi$.

Conversely, if $\pi$ contains $(23;1\to 2)$, then there exist $x_1<x_2<x_3$ such that $x_2$ appears before $x_3$ in $\pi$ and $\hat{\pi}(x_1)=x_2$. We must have $x_1$ before $x_3$ in $\pi$; otherwise, $x_1$ would appear after $x_3$, and hence after $x_2$ as well. 
But since $\hat{\pi}(x_1)=x_2$, the element $x_2$ would be the first element of a cycle containing $x_3$, forcing $x_2 > x_3$, a contradiction. 
Hence $x_1$ appears before $x_3$ in $\pi$, and $(x_1x_3; x_1\to x_2)$ forms an occurrence of $(13;1\to 2)$ in $\pi$.
	
Therefore the two avoidance sets coincide. By Theorem~5.1 of~\cite{Archer}, $|\mathcal{S}_n(13; 1\to 2)| = B_n$, and hence $|\mathcal{S}_n(23; 1\to 2)| = B_n$.
\end{proof}

\begin{theorem}\label{thm:23-31}
	For $n\geq 1$, we have 
	$|\mathcal{S}_n(23; 3\to 1)| = C_n$.
\end{theorem}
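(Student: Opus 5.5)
We will reduce the arrow pattern to a vincular pattern using Lemma~\ref{lem:arrow-vincular}(ii), and then read off the count from Lemma~\ref{lem:vincular}.

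In the notation of Lemma~\ref{lem:arrow-vincular}, take $k=3$ and $\nu=\nu_1\nu_2=23$. The missing letter is then $r=1$. The arrow $3\to 1$ has the form $\nu_i\to r$ with $i=2$, since $\nu_2=3$.

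The hypothesis of Lemma~\ref{lem:arrow-vincular}(ii) asks that $r<\max(\nu_1,\ldots,\nu_{i-1})$. Here this reads $1<\max(\nu_1)=2$, which holds. The lemma therefore gives
\[
\mathcal{S}_n(23;\,3\to 1)=\mathcal{S}_n(2\overline{31}).
\]
Intuitively, the element $2$ placed before $3$ prevents $3$ from being the last element of a cycle whose first element is $1$. So the condition $\hat{\pi}(x_3)=x_1$ forces $x_1$ to sit immediately after $x_3$ in the one-line notation.

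The pattern $2\overline{31}$ appears in the Catalan list of Lemma~\ref{lem:vincular}. Hence $|\mathcal{S}_n(23;\,3\to 1)|=C_n$.

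The only step requiring care is checking that the hypothesis of Lemma~\ref{lem:arrow-vincular}(ii) applies with $i=2$, including the correct identification of $r$ and of the vincular pattern. Once that is confirmed, there is no real obstacle.
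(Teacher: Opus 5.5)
Your proof is correct and is essentially the paper's argument: reduce $(23;3\to 1)$ to the vincular pattern $2\overline{31}$, then read off $C_n$ from Lemma~\ref{lem:vincular}. The only difference is the lemma used for the reduction. The paper cites Lemma~\ref{lem:Archer}(ii), whose hypothesis $r=1<3=\nu_2$ is the more direct reason: $x_1<x_3$ already prevents $x_1$ from heading the cycle of $x_3$. Your use of Lemma~\ref{lem:arrow-vincular}(ii), with $1<2$, works equally well.
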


\begin{proof}
	By Lemma~\ref{lem:Archer}(ii) and Lemma~\ref{lem:vincular}, we have
	$|\mathcal{S}_n(23; 3\to 1)| = |\mathcal{S}_n(2\overline{31})| = C_n$. 
\end{proof}

\begin{theorem}\label{thm:23-13}
	For $n\geq 1$, we have 
	$|\mathcal{S}_n(23; 1\to 3)| = C_n$.
\end{theorem}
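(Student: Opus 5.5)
The natural route is to relate $(23;1\to 3)$ to $(23;3\to 1)$, which Theorem~\ref{thm:23-31} already counts by $C_n$. Lemma~\ref{lem:arrow-vincular} does not apply directly: the arrow is $r\to\nu_i$ with $r=1$ and $\nu_i=3$, and $3\in\mathrm{Lmax}(23)$. Lemma~\ref{lem:Archer}(i) also fails, since $r<\nu_i$. So I would use the involution $\phi(\pi)=\theta((\hat\pi)^{-1})$ from the proof of Proposition~\ref{prop:mk}. It satisfies $\hat{\phi(\pi)}=(\hat\pi)^{-1}$, so it turns the arrow $x_1\to x_3$ into $x_3\to x_1$. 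I would aim to show that $\phi$ maps $\mathcal{S}_n(23;1\to 3)$ onto $\mathcal{S}_n(23;3\to 1)$. Then Theorem~\ref{thm:23-31} gives the count $C_n$.

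\textbf{Classifying occurrences.} Suppose $\pi$ contains $(23;1\to 3)$ via $x_1<x_2<x_3$, with $x_2$ before $x_3$ and $\hat\pi(x_1)=x_3$. There are two possibilities.

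\emph{Case B.} Here $x_3$ heads its cycle, so it is a left-to-right maximum of $\pi$, and $x_1$ is the last entry of that cycle. Since $x_3$ is a left-to-right maximum, the left-to-right maximum $m$ heading the block containing $x_2$ satisfies $x_2\le m<x_3$. By the block description of $\phi$, both $m$ and $x_3$ stay fixed in position and value. Replacing $x_2$ by $m$ therefore gives the occurrence $(m\,x_3;\,x_3\to x_1)$ of $(23;3\to 1)$ in $\phi(\pi)$.

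\emph{Case A.} Here $x_3$ immediately follows $x_1$ in $\pi$, both lie in the same block, and $x_2$ lies somewhere before $x_1$. If $x_2$ is in an earlier block, or is that block's head, then $x_2$ is still before $x_3$ after the blocks are reversed, and we are done. The remaining subcase is that $x_2$ lies inside the same block before $x_1$. Reversal then moves $x_2$ after $x_3$.

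\textbf{The main obstacle.} This last subcase is where I expect the difficulty. My first attempt would be to choose the occurrence extremally: take $x_2$ to be the leftmost entry of $\pi$ with a value in $(x_1,x_3)$. Then I would argue that, if $x_2$ is forced into the same block, the entries between $x_2$ and $x_1$ produce a different occurrence. That occurrence should either be of Case B type or have a witness outside the block, for example via an adjacent pair $y\,z$ with $y<z$ inside the block.

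If this repair fails, I would fall back on proving the Catalan recursion $a_n=\sum_{j}a_{j}a_{n-1-j}$ directly. The starting point is a characterization of avoiders: $\pi$ avoids $(23;1\to 3)$ exactly when, for every ascent $x_1x_3$ with $x_3$ not a left-to-right maximum, no value in $(x_1,x_3)$ appears before $x_1$, and, for every cycle, no value between its last element and its head appears before the head. Decomposing at the position of $n$, the cycle starting at $n$ absorbs the suffix. Both constraints should then split into independent conditions on the prefix and on the suffix. Checking this independence is the key verification in the fallback route.
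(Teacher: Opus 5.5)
Your primary route cannot be completed, because the claim it aims at is false: $\phi$ does not carry $\mathcal{S}_n(23;1\to 3)$ onto $\mathcal{S}_n(23;3\to 1)$. Take $\pi=4213$. Then $\hat{\pi}=(4213)$, so $\hat{\pi}(1)=3$, and $2$ precedes $3$; hence $\pi$ contains $(23;1\to 3)$. This is exactly your bad subcase of Case~A, with $x_2=2$ inside the block of $4$ before $x_1=1$ and nothing between them. But $\phi(\pi)=4312$ avoids $(23;3\to 1)$, i.e.\ $2\overline{31}$: its only descents are $43$ and $31$, and $2$ lies to the right of $3$. The other direction also fails: $4132$ avoids $(23;1\to 3)$, while $\phi(4132)=4231$ contains $(23;3\to 1)$. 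Structurally, for a single cycle $\pi=n\sigma$, $\pi$ avoids $(23;1\to 3)$ iff $\sigma$ avoids $2\overline{13}$ (equivalently $213$). On the other side, $\phi(\pi)=n\sigma^r$ avoids $(23;3\to 1)$ iff $\sigma$ avoids $\overline{13}2$ (equivalently $132$). These classes are equinumerous but different sets, so no extremal choice of $x_2$ can rescue Case~A.

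Your fallback is the right kind of argument, and it is essentially the paper's; the paper splits off the first cycle $s\,\sigma$ rather than the last cycle $n\,\beta$. Your characterization of the two types of occurrences is also correct. But the ``independence'' you postpone is the whole content. It fails unless you first prove the key structural fact: in an avoider, the cycles of $\hat{\pi}$ occupy consecutive intervals of values, so every element of an earlier cycle is smaller than every element of a later one. Without this, the prefix $\alpha$ in $\pi=\alpha\,n\,\beta$ has an arbitrary value set, both of your constraints couple $\alpha$ with $\beta$, and no product formula follows. The proof is short. Suppose some entry of the cycle $(n\,\beta)$ is smaller than $c=\max\alpha$, and let $d$ be the last such entry. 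If $d$ is the last entry of $\beta$, then $(c\,n;\,d\to n)$ is an occurrence, so your second condition fails. Otherwise the entry $d'$ right after $d$ satisfies $d<c<d'$, so $(c\,d';\,d\to d')$ is an occurrence and your first condition fails. With this in hand, $\alpha$ ranges over $\mathcal{S}_j(23;1\to 3)$ on $[j]$, and $\beta$ ranges over $2\overline{13}$-avoiders on $\{j+1,\dots,n-1\}$. This gives $a_n=\sum_{j=0}^{n-1}a_jC_{n-1-j}$ by Lemma~\ref{lem:vincular}. Note that the suffix is constrained by $2\overline{13}$, not by $(23;1\to 3)$, so your recursion $\sum_j a_ja_{n-1-j}$ is valid only after an induction establishing $a_m=C_m$.
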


\begin{proof}
	Let $\pi \in \mathcal{S}_n(23; 1\to 3)$. Consider two cycles of $\hat{\pi}$,
	\[
	C = (c_1, c_2, \ldots, c_k) \quad \text{and} \quad D = (d_1, d_2, \ldots, d_\ell),
	\]
	with $c_1 < d_1$. We claim that $c_1 < d_j$ for all $1 \le j \le \ell$. Suppose otherwise, and let $d_j$ be the last element of $D$ that is smaller than $c_1$. If $j = \ell$, then $(c_1d_1; d_{\ell}\to d_1)$ forms an occurrence of $(23; 1\to 3)$ in $\pi$, a contradiction. 
	If $j < \ell$, then $(c_1d_{j+1}; d_j \to d_{j+1})$ forms an occurrence of $(23; 1\to 3)$ in $\pi$, again a contradiction. 
	Hence the cycles of $\hat{\pi}$ are totally ordered by their elements: every element of a preceding cycle is smaller than every element of any subsequent cycle.
	
	Now let the first cycle of $\hat{\pi}$ have length $s$, where $1 \le s \le n$. By the ordering property above, its elements must form the interval $[s]$, and the first element of the cycle, being its maximum, must be $s$. In the one-line notation, this means $\pi_1 = s$. Write
	\[
	\pi = s\, \sigma\, \tau,
	\]
	where $\sigma = \pi_2\cdots\pi_s$ and $\tau = \pi_{s+1}\cdots\pi_n$. It is straightforward to verify that $\pi$ avoids $(23; 1\to 3)$ if and only if $\sigma$ avoids the vincular pattern $2\overline{13}$ and $\tau$ avoids $(23; 1\to 3)$. By Lemma~\ref{lem:vincular}, the number of permutations of length $s-1$ avoiding $2\overline{13}$ is the Catalan number $C_{s-1}$. Let $a_n = |\mathcal{S}_n(23; 1\to 3)|$. Then the above decomposition yields the recurrence
	\[
	a_n = \sum_{s=1}^{n} C_{s-1} a_{n-s},
	\]
	with $a_0 = 1$. This is precisely the recurrence satisfied by the Catalan numbers $C_n$ (with $C_0 = 1$). Therefore $a_n = C_n$ for all $n \ge 1$.
\end{proof}

\begin{theorem}\label{thm:23-11}
	For $n\ge 1$, we have
	\begin{equation}\label{equ:23-11}
		|\mathcal{S}_n(23; 1\to 1)|
		= d_n + d_{n-1}
		+ \sum_{m=1}^{n-1}\sum_{r=0}^{m-1}
		\binom{m-1}{r}\binom{n-m+r-1}{r} r! \cdot d_{m-1-r},
	\end{equation}
	where $d_n$ denotes the $n$-th derangement number.
\end{theorem}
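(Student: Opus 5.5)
The approach is to first characterize the avoidance class structurally, and then count it by decomposing at the smallest fixed point of $\hat{\pi}$.

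\textbf{Characterization.} Suppose $\hat{\pi}(x)=x$. Then $x$ is a singleton cycle, so $x$ is a left-to-right maximum of $\pi$. Hence every element larger than $x$ lies to the right of $x$. Consequently $\pi$ contains $(23;1\to 1)$ if and only if, for some fixed point $x$ of $\hat{\pi}$, the elements larger than $x$ do \emph{not} appear in decreasing order in $\pi$. So $\pi\in\mathcal{S}_n(23;1\to 1)$ if and only if, for every fixed point $x$ of $\hat{\pi}$, the elements $x+1,\ldots,n$ appear in $\pi$ in decreasing order. It suffices to impose this at the smallest fixed point $m$, since the condition for $m$ implies it for every larger fixed point.

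\textbf{Case split on $m$.} There are three cases.
\begin{itemize}
\item If $\hat{\pi}$ has no fixed points, $\pi$ avoids the pattern trivially. This gives $d_n$ permutations.
\item If $m=n$, then $\hat{\pi}$ restricted to $[n-1]$ is a derangement. This gives $d_{n-1}$ permutations.
\item If $1\le m\le n-1$, the structure is more constrained. Since $m$ is a singleton cycle and $m<n$, $m$ is immediately followed by a left-to-right maximum. The elements $m+1,\ldots,n$ are decreasing, so this successor must be $n$. Thus $n$ is the last left-to-right maximum, and $\pi=\rho\, m\, n\,\tau$, where every entry of $\rho$ is smaller than $m$.
\end{itemize}

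\textbf{Counting the case $1\le m\le n-1$.} Let $r$ be the number of elements of $[m-1]$ lying in $\tau$, with $0\le r\le m-1$. I would count the pieces as follows.
\begin{itemize}
\item Choose this $r$-set $R$ in $\binom{m-1}{r}$ ways.
\item The word $\rho$ is a permutation of $[m-1]\setminus R$. Because $m$ is the smallest fixed point, the cycles of $\hat{\pi}$ read off from $\rho$ must have no fixed point. Relabelled, $\theta^{-1}(\rho)$ is a derangement, giving $d_{m-1-r}$ choices.
\item The tail $\tau$ consists of the $n-m-1$ elements $m+1,\ldots,n-1$, which are forced to be in decreasing order, interleaved with the $r$ elements of $R$ in arbitrary order. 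The interleaving positions give $\binom{n-m-1+r}{r}$ choices and the order of $R$ gives $r!$.
\end{itemize}

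\textbf{Verification of the converse.} I must check that every such assembly produces a permutation in the class whose smallest fixed point is exactly $m$.
\begin{itemize}
\item No element of $\tau$ is a left-to-right maximum, so everything in $\tau$ lies in the cycle of $n$. In particular, no new fixed point arises there, except that $n$ itself is fixed when $\tau$ is empty.
\item That case ($m=n-1$, $r=0$) is harmless, since the decreasing condition above $n$ is vacuous.
\item The fixed points inside $\rho$ are exactly those of $\theta^{-1}(\rho)$, of which there are none.
\end{itemize}
Summing over $m$ and $r$ then yields Eq.~\eqref{equ:23-11}.

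The main obstacle is this bookkeeping: confirming that the decomposition $\rho\,m\,n\,\tau$ is a bijection. This means checking that the cycle structure of $\hat{\pi}$ splits cleanly along $\rho$ and $m\,n\,\tau$, and that the decreasing constraint on $\{m+1,\ldots,n\}$ imposes nothing on the relative order of the small elements in $\tau$.
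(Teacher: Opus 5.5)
Your proposal is correct and follows essentially the same route as the paper: the same split by the smallest fixed point $m$ of $\hat{\pi}$, the same parameter $r$ for the small elements placed to the right of $m$, and the same factors $\binom{m-1}{r}$, $\binom{n-m+r-1}{r}r!$ and $d_{m-1-r}$. Your explicit argument that $n$ must immediately follow $m$, and your check of the converse (including the case $m=n-1$, $r=0$), supply details the paper leaves implicit when it simply excludes the gap before $n$.
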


\begin{proof}
	Let $\pi \in \mathcal{S}_n(23; 1\to 1)$. We classify permutations according to the fixed points of $\hat{\pi}$.
	
	\textbf{Case 1: $\hat{\pi}$ has no fixed points.} 
Then $\pi$ automatically avoids $(23;1\to 1)$.
The number of such permutations is exactly the derangement number $d_n$.
	
	\textbf{Case 2: The smallest fixed point of $\hat{\pi}$ is $n$.}
	In this case, $n$ is the only fixed point. Deleting $n$ leaves a derangement on the remaining $n-1$ elements, contributing $d_{n-1}$ permutations.
	
	\textbf{Case 3: The smallest fixed point of $\hat{\pi}$ is $m$, where $1 \le m \le n-1$.}
	Since $m$ is a fixed point of $\hat{\pi}$, it is a left-to-right maximum of $\pi$. Consequently, every element larger than $m$ must appear to its right. Furthermore, these elements $m+1, m+2, \ldots, n$ must occur in strictly decreasing order. 
	Indeed, if two elements $a < b$ both greater than $m$ appeared with $a$ before $b$ to the right of $m$, then $(ab; m\to m)$ would form an occurrence of $(23;1\to 1)$ in $\pi$, a contradiction. 
	
	Now suppose that among the elements $\{1,2,\ldots,m-1\}$, exactly $r$ of them are inserted into the decreasing sequence $n, n-1, \ldots, m+1$ on the right of $m$. There are $\binom{m-1}{r}$ ways to choose these $r$ elements. They can be distributed among the $n-m$ gaps after each element of the decreasing sequence (excluding the gap immediately before $n$) and then permuted arbitrarily, giving
	\[
	\binom{n-m+r-1}{r} r!
	\]
	possibilities. The remaining $m-1-r$ elements, which lie to the left of $m$, must themselves form a derangement, as $m$ is the smallest fixed point of $\hat{\pi}$; they are counted by $d_{m-1-r}$.
	
	Summing over all admissible $m$ and $r$ yields the double sum in the theorem. 
	Combining the three cases completes the proof.
\end{proof}

\section{Permutations that avoid $(32; b\to c)$}\label{sec:32-H}

We now enumerate the arrow patterns of the form $(32; b\to c)\in \mathcal{A}_3$. The enumeration results are summarized in Table \ref{tab:32-H}.

\begin{table}[htbp]
	\centering
	\begin{tabular}{ccccc}
		\toprule
		$\nu$ & $H$ & $a_n = |\mathcal{S}_n(\nu;H)|$ & OEIS & Theorem \\
		\midrule
		$32$ & $1\to 2$ & $B_n$ & A000110 & Theorem \ref{thm:32-12} \\
		$32$ & $2\to 1$ & $B_n$ & A000110 & Theorem \ref{thm:32-21} \\
		$32$ & $3\to 1$ & $C_n$ & A000108 & Theorem \ref{thm:32-31} \\
		$32$ & $1\to 1$ &  \text{Eq.~\eqref{equ:32-11}}  & New & Theorem \ref{thm:32-11} \\
		\bottomrule
	\end{tabular}
	\caption{Enumeration of permutations avoiding the arrow pattern $(32;H)\in \mathcal{A}_3$ for various $H$.}
	\label{tab:32-H}
\end{table}

\begin{theorem}\label{thm:32-12}
	For $n\geq 1$, we have 
	$|\mathcal{S}_n(32; 1\to 2)| = B_n$.
\end{theorem}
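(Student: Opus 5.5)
The plan is to reduce the arrow pattern directly to a vincular pattern via Lemma~\ref{lem:arrow-vincular}, and then read off the count from Lemma~\ref{lem:vincular}, exactly as in the proofs of Theorems~\ref{thm:31-12} and~\ref{thm:23-21}.

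\textbf{Setup.} Write $\nu = 32$, so $k=3$, $\nu_1 = 3$, $\nu_2 = 2$, and the missing letter is $r = 1$. The arrow $1 \to 2$ has the form $r \to \nu_i$ with $i = 2$.

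\textbf{Applying Lemma~\ref{lem:arrow-vincular}(i).} This part requires $\nu_2 \notin \mathrm{Lmax}(\nu)$. That holds, since $\nu_2 = 2$ is preceded by the larger entry $3$. Note that Lemma~\ref{lem:Archer}(i) does not apply here, because $r = 1 < 2 = \nu_2$. This is why the refined lemma is the right tool.

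Intuitively, in any occurrence, $x_3$ precedes $x_2$ in $\pi$. So $x_2$ is not a left-to-right maximum, and hence cannot open a cycle of $\hat{\pi}$. Therefore $\hat{\pi}(x_1) = x_2$ forces $x_2$ to sit immediately after $x_1$ in $\pi$. Conversely, the adjacency of $x_1$ and $x_2$ gives the arrow, since $x_2$ is not a cycle leader.

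The lemma then gives
\[
\mathcal{S}_n(32; 1\to 2) = \mathcal{S}_n(3\overline{12}).
\]

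\textbf{Conclusion.} The pattern $3\overline{12}$ is in the Bell list of Lemma~\ref{lem:vincular}, so $|\mathcal{S}_n(32;1\to 2)| = B_n$.

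There is no real obstacle. The only point needing care is confirming that the hypothesis of Lemma~\ref{lem:arrow-vincular}(i), rather than that of Lemma~\ref{lem:Archer}, is the one satisfied, and that the resulting vincular word is $3\overline{12}$ with the bar on the correct adjacent pair.
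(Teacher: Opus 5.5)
Your proposal is correct and matches the paper's proof exactly: Lemma~\ref{lem:arrow-vincular}(i) applies because $\nu_2=2\notin\mathrm{Lmax}(32)$, which gives $\mathcal{S}_n(32;1\to 2)=\mathcal{S}_n(3\overline{12})$, and Lemma~\ref{lem:vincular} then yields $B_n$. You are also right that Lemma~\ref{lem:Archer}(i) does not apply here (since $r=1<\nu_2$), which is why the refined lemma is the appropriate tool.
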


\begin{proof}
	By Lemma~\ref{lem:arrow-vincular}(i) and Lemma~\ref{lem:vincular}, we have
	$|\mathcal{S}_n(32; 1\to 2)| = |\mathcal{S}_n(3\overline{12})| = B_n$. 
\end{proof}

\begin{theorem}\label{thm:32-21}
	For $n\geq 1$, we have 
	$|\mathcal{S}_n(32; 2\to 1)| = B_n$.
\end{theorem}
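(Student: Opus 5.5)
The plan is a direct reduction to a vincular pattern, in the same way as Theorems~\ref{thm:31-21} and~\ref{thm:32-12}. For the arrow pattern $(32;2\to 1)\in\mathcal{A}_3$ we have $\nu=\nu_1\nu_2=32$, and the unique element of $[3]\setminus\{3,2\}$ is $r=1$. The arrow $2\to 1$ has the form $\nu_i\to r$ with $i=2$ and $\nu_2=2$. Since $r=1<\nu_2=2$, the hypothesis of Lemma~\ref{lem:Archer}(ii) holds.

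First, we apply Lemma~\ref{lem:Archer}(ii) to obtain
\[
\mathcal{S}_n(32;2\to 1)=\mathcal{S}_n(3\overline{21}).
\]
The arrow $\nu_2\to r$ replaces the single entry $\nu_2$ by the adjacent pair $\overline{\nu_2 r}=\overline{21}$, and leaves the prefix $\nu_1=3$ in front. As a sanity check on Lemma~\ref{lem:Archer}(ii): since $x_1<x_2$, $\hat{\pi}(x_2)=x_1$ forces $x_1$ to immediately follow $x_2$ in $\pi$, because $x_1$ cannot start the cycle containing $x_2$. Conversely, if $x_1$ immediately follows the larger $x_2$, then $x_1$ is not a left-to-right maximum, so the two lie in the same cycle and $\hat{\pi}(x_2)=x_1$.

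Second, we invoke Lemma~\ref{lem:vincular}, which lists $3\overline{21}$ among the vincular patterns counted by the Bell numbers. This gives $|\mathcal{S}_n(32;2\to 1)|=B_n$.

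I do not expect any real obstacle here. The only point needing care is to check correctly which of the two cases of Lemma~\ref{lem:Archer} applies, namely that the arrow goes from a pattern letter to the missing letter $r$, and that $r$ is smaller. It is also worth confirming that the resulting vincular pattern is exactly $3\overline{21}$ rather than, say, $\overline{32}1$. Either pattern lies in the Bell list of Lemma~\ref{lem:vincular}, so the conclusion is robust in any case.
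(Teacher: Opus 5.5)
Your proposal is correct and matches the paper's proof exactly. Lemma~\ref{lem:Archer}(ii), applied with $r=1<\nu_2=2$, gives $\mathcal{S}_n(32;2\to 1)=\mathcal{S}_n(3\overline{21})$, and Lemma~\ref{lem:vincular} then yields $B_n$.
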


\begin{proof}
	By Lemma~\ref{lem:Archer}(ii) and Lemma~\ref{lem:vincular}, we have
	$|\mathcal{S}_n(32; 2\to 1)| = |\mathcal{S}_n(3\overline{21})| = B_n$. 
\end{proof}

\begin{theorem}\label{thm:32-31}
	For $n\geq 1$, we have 
	$|\mathcal{S}_n(32; 3\to 1)| = C_n$.
\end{theorem}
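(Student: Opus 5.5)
The plan is to reduce the arrow pattern directly to a vincular pattern via Lemma~\ref{lem:Archer}, in the same way as in Theorems~\ref{thm:23-31} and \ref{thm:32-21}. Here $\nu = 32$ is a word on $[3]$ with $k=3$. The missing letter is $r = 1$, and the arrow is $\nu_1 \to r$ with $\nu_1 = 3$. Since $r = 1 < 3 = \nu_1$, Lemma~\ref{lem:Archer}(ii) applies with $i = 1$. It gives
\[
\mathcal{S}_n(32;\ 3\to 1) = \mathcal{S}_n(\overline{31}2).
\]
Intuitively, $\hat{\pi}(x_3) = x_1$ with $x_1 < x_3$ forces $x_1$ to sit immediately to the right of $x_3$ in $\pi$. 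Indeed, $x_1$ cannot be the first element of the cycle containing $x_3$, because the first element of a cycle is its maximum.

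Next I will observe that $\overline{31}2$ is among the patterns listed in the Catalan case of Lemma~\ref{lem:vincular}. Hence $|\mathcal{S}_n(\overline{31}2)| = C_n$ for all $n \ge 1$, which completes the argument.

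I do not expect any real obstacle. The only care needed is in matching indices: the arrow must have the form $\nu_i \to r$ with $r < \nu_i$, and the overline must be placed on the pair $\nu_i r = 31$ at the position of $\nu_i$, giving $\overline{31}2$ rather than, say, $3\overline{12}$.
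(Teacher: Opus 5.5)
Your proposal is correct and follows the paper's own proof exactly: Lemma~\ref{lem:Archer}(ii) with $i=1$, $r=1<\nu_1=3$ gives $\mathcal{S}_n(32;\,3\to 1)=\mathcal{S}_n(\overline{31}2)$, and Lemma~\ref{lem:vincular} places $\overline{31}2$ in the Catalan case. Your placement of the overline on $31$ (rather than $3\overline{12}$) is also correct.
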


\begin{proof}
	By Lemma~\ref{lem:Archer}(ii) and Lemma~\ref{lem:vincular}, we have
	$|\mathcal{S}_n(32; 3\to 1)| = |\mathcal{S}_n(\overline{31}2)| = C_n$. 
\end{proof}

The proof of the following theorem is completely analogous to that of Theorem~\ref{thm:23-11}, the only difference being that in Case 3, the elements $m+1, m+2, \ldots, n$ must appear in strictly increasing order to the right of $m$, rather than decreasing order. We therefore omit the details and leave the verification to the reader.

\begin{theorem}\label{thm:32-11}
	For $n\ge 1$, we have
	\begin{equation}\label{equ:32-11}
		|\mathcal{S}_n(32; 1\to 1)|
		= d_n + d_{n-1}
		+ \sum_{m=1}^{n-1}\sum_{r=0}^{m-1}
		\binom{m-1}{r}\binom{n-m+r-1}{r} r! \cdot d_{m-1-r},
	\end{equation}
	where $d_n$ denotes the $n$-th derangement number.
\end{theorem}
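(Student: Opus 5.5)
We mirror the proof of Theorem~\ref{thm:23-11} and classify $\pi\in\mathcal{S}_n(32;1\to 1)$ by the smallest fixed point of $\hat{\pi}$. An occurrence of $(32;1\to 1)$ consists of a fixed point $x_1$ of $\hat\pi$ together with two elements $x_3>x_2>x_1$ such that $x_3$ precedes $x_2$ in $\pi$. If $\hat\pi$ has a fixed point at all, the smallest one, $m$, gives the most room for such a descending pair. Hence $\pi$ avoids the pattern if and only if either $\hat\pi$ has no fixed point, or the elements $m+1,\ldots,n$ occur in increasing order in $\pi$, where $m$ is the smallest fixed point of $\hat\pi$. The three cases below are disjoint and exhaustive.

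\textbf{Case 1: $\hat\pi$ has no fixed points.} These permutations automatically avoid the pattern and contribute $d_n$.

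\textbf{Case 2: the smallest fixed point is $m=n$.} Then $n$ is the only fixed point, so $\pi$ ends with $n$ and the prefix is a derangement, contributing $d_{n-1}$.

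\textbf{Case 3: the smallest fixed point is $m\le n-1$.} Since $m$ is a fixed point of $\hat\pi$, it is a left-to-right maximum of $\pi$, and its cycle in $\hat\pi$ is a singleton. Both facts mean that every element larger than $m$ lies to the right of $m$, and that the entry immediately after $m$ is larger than $m$.

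\begin{itemize}
\item The elements $m+1,\ldots,n$ appear to the right of $m$ in increasing order, so $m+1$ follows $m$ directly.
\item Choose $r$ of the small elements $1,\ldots,m-1$, in $\binom{m-1}{r}$ ways, to place to the right of $m$. They may go only into the $n-m$ gaps after $m+1,\ldots,n$, since the gap between $m$ and $m+1$ is forbidden. Placing them in an arbitrary order gives $\binom{n-m+r-1}{r}r!$ arrangements.
\item The remaining $m-1-r$ small elements sit to the left of $m$. There they form the initial cycles of $\hat\pi$, and they must have no fixed points because $m$ is the smallest fixed point. This gives $d_{m-1-r}$ choices.
\end{itemize}

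The step needing care is that Case 3 produces no new fixed points and preserves the chosen $m$, which is where increasing order differs from the decreasing order in Theorem~\ref{thm:23-11}. Here $m+1,\ldots,n$ are all left-to-right maxima and each starts a new cycle of $\hat\pi$. The cycle starting at $j>m$ is a fixed point exactly when no small element is inserted right after $j$.

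A fixed point $j>m$ is still permitted, since only the smallest fixed point is constrained. Such fixed points would violate avoidance only if they yielded a descending pair among the elements larger than the fixed point, and increasing order prevents that. So no gap restriction is needed beyond the one after $m$.

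The small elements inserted to the right of $m$ are not left-to-right maxima, so they never start cycles and never become fixed points. This confirms that the count in Case 3 is exactly
\[
\binom{m-1}{r}\binom{n-m+r-1}{r}r!\,d_{m-1-r}.
\]
Summing over $1\le m\le n-1$ and $0\le r\le m-1$, and adding Cases 1 and 2, gives Eq.~\eqref{equ:32-11}.
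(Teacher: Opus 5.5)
Your proof is correct and takes the paper's route: the paper also splits on the smallest fixed point $m$ of $\hat\pi$ exactly as in Theorem~\ref{thm:23-11}, with $m+1,\ldots,n$ now in increasing order to the right of $m$ and the $r$ small elements distributed among the $n-m$ gaps after them. Your extra check, that the new left-to-right maxima $m+1,\ldots,n$ may themselves be fixed points without affecting avoidance or the choice of $m$, fills in the detail the paper leaves to the reader.
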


Combining Theorem~\ref{thm:23-11} and Theorem~\ref{thm:32-11}, we have $(23; 1\to 1) \sim (32; 1\to 1)$.

%

\section{Final remarks and future directions}\label{sec:remark}

In this paper, we have resolved the two cases left open by Archer and Laudone~\cite{Archer}, namely $(12;3\to 3)$ and $(21;3\to 3)$, and have enumerated the remaining avoidance classes for arrow patterns $(\nu; b\to c)\in \mathcal{A}_3$ with $\nu \in \{31, 23, 32\}$ and $b, c\in [3]$. Together with the results of Archer and Laudone~\cite{Archer}, the only unresolved case for $|\nu| \le 2$ is now $(32; 1\to 3)$.

We have also established several arrow-Wilf equivalences, many of which arise from reversal, complementation, and insertion operations. However, a number of these equivalences were obtained through ad hoc arguments, suggesting that a more unified framework for arrow-Wilf equivalence may exist. We leave this as an open problem for future investigation.

We also record an intriguing observation. We have computed the avoidance counts for the arrow patterns $(12;3\to 3)$ and $(23;1\to 1)$ and found that the resulting sequences agree for all values of $n$ tested. This suggests that these two patterns may be arrow-Wilf-equivalent. However, the explicit formulas we obtained for the two sequences are quite different in form, and we have not been able to establish the equivalence by a direct bijection or by any other method. We leave this as an open problem.

In a separate paper, we will extend this study to arrow patterns $(\nu; b\to c)$ where $\nu \in \mathcal{S}_3$. The enumeration in that setting is considerably more intricate, involving complex recurrences and bijections, and yielding connections to a wider range of combinatorial sequences. 


\section*{Acknowledgments}
	The work  was supported by
	the National Natural Science Foundation of China (11801378).
	

\end{document}